\documentclass{amsart}

\usepackage{subfiles}
\usepackage{comment}
\usepackage{float}
\usepackage{marginnote}
\usepackage{tabu}
\usepackage{euscript}
\usepackage[dvipsnames]{xcolor}   		             		
\usepackage{graphicx}			
\usepackage{amssymb}
\usepackage{mathrsfs}
\usepackage{amsthm}
\usepackage{amsmath}
\usepackage{stmaryrd}
\usepackage{tikz}
\usepackage{tikz-cd}
\usetikzlibrary{calc,arrows}
\usepackage{accents}
\usepackage{upgreek}
\usepackage{enumerate}
\usepackage{bm}
\usepackage{mathtools}
\usepackage[all]{xy}
\usepackage{caption}
\usepackage{url}
\usepackage{float}
\usepackage{todonotes} 
\usepackage{colonequals}
\usepackage{bbm}
\usepackage{longtable}
\usepackage[full]{textcomp}
\usepackage[cal=cm]{mathalfa}
\usepackage{xparse}
\usepackage{comment}
\usepackage[noadjust]{cite} 

\usepackage{faktor} 
\usepackage{xfrac} 

\usetikzlibrary{calc}
\usetikzlibrary{fadings}
\usetikzlibrary{decorations.pathmorphing}
\usetikzlibrary{decorations.pathreplacing}
\usepackage{tikz,tikz-cd,tikz-3dplot}
\usepackage{pgfplots}
\usetikzlibrary{arrows,shadows,positioning, calc, decorations.markings, 
hobby,quotes,angles,decorations.pathreplacing,intersections,shapes}
\usepgflibrary{shapes.geometric}
\usetikzlibrary{fillbetween,backgrounds}
\usetikzlibrary{arrows.meta} 

\usepackage[margin=1.05in]{geometry}

\tikzset{
  commutative diagrams/.cd, 
  arrow style=tikz, 
  diagrams={>=stealth}
}
\tikzset{
  arrow/.pic={\path[tips,every arrow/.try,->,>=#1] (0,0) -- +(0,4pt);},
  pics/arrow/.default={triangle 90}
}
\tikzset{->-/.style={decoration={
  markings,
  mark=at position .6 with {\arrow{latex}}},postaction={decorate}}
  }
\tikzset{
  c/.style={every coordinate/.try}
}

\theoremstyle{definition}
\newtheorem{innercustomthm}{Theorem}
\newenvironment{customthm}[1]
  {\renewcommand\theinnercustomthm{#1}\innercustomthm}
  {\endinnercustomthm}

\theoremstyle{definition}

  \theoremstyle{definition}

\makeatletter
\def\@tocline#1#2#3#4#5#6#7{\relax
  \ifnum #1>\c@tocdepth 
  \else
    \par \addpenalty\@secpenalty\addvspace{#2}%
    \begingroup \hyphenpenalty\@M
    \@ifempty{#4}{%
      \@tempdima\csname r@tocindent\number#1\endcsname\relax
    }{%
      \@tempdima#4\relax
    }%
    \parindent\z@ \leftskip#3\relax \advance\leftskip\@tempdima\relax
    \rightskip\@pnumwidth plus4em \parfillskip-\@pnumwidth
    #5\leavevmode\hskip-\@tempdima
      \ifcase #1
       \or\or \hskip 1em \or \hskip 2em \else \hskip 3em \fi%
      #6\nobreak\relax
    \dotfill\hbox to\@pnumwidth{\@tocpagenum{#7}}\par
    \nobreak
    \endgroup
  \fi}
\makeatother

\newcounter{marginnote}
\DeclareMathAlphabet{\mathpzc}{OT1}{pzc}{m}{it}

\usepackage[backref=page]{hyperref}
\hypersetup{
  colorlinks   = true,          
  urlcolor     = blue!65!black,          
  linkcolor    = blue!65!black,          
  citecolor   = blue             
}

\pgfplotsset{compat=1.18}

\theoremstyle{definition}
\newtheorem{theorem}{Theorem}[section]

\newtheorem*{claim*}{Claim}

\newtheorem{corollary}[theorem]{Corollary}

\newtheorem{lemma}[theorem]{Lemma}
\newtheorem{proposition}[theorem]{Proposition}
\newtheorem{remark}[theorem]{Remark}

\newtheorem*{runningexample*}{Running example}

\newtheorem*{aside*}{Aside}

\newtheorem{definition}[theorem]{Definition}

\newtheorem*{notation*}{Notation} 
\newtheorem{proposition-definition}[theorem]{Proposition-Definition}
\newtheorem{theorem-definition}[theorem]{Theorem-Definition} 

\newtheorem*{thm:motions_to_monodromies}{Theorem~\ref{thm:motions_to_monodromies}}
\newtheorem*{thm:monodromy_split}{Theorem~\ref{thm:monodromy_group_split_links}}
\newtheorem*{thm:hopf}{Theorem~\ref{thm:monodromy_group_hopf-unknots}}

\DeclareMathOperator{\id}{id}

\newcommand{\bcd}{\begin{center}\begin{tikzcd}}
\newcommand{\ecd}{\end{tikzcd}\end{center}}

\newcommand{\Z}{\mathbb{Z}}

\newcommand{\R}{\mathbb{R}}

\newcommand{\Dcal}{\mathcal{D}}

\newcommand{\calM}{\mathcal{M}}

\newcommand{\calP}{\mathcal{P}} 

\newcommand{\calE}{\mathcal{E}}

\newcommand{\motion}[2]{\mathcal{M}\left({#1}, {#2}\right)} 

\newcommand{\diff}{\operatorname{Diff}} 
\newcommand{\emb}{\operatorname{Emb}} 
\newcommand{\sub}{\operatorname{Sub}} 
\newcommand{\leg}

\DeclareMathOperator{\PuMon}{PuMon}
\DeclareMathOperator{\Kh}{Kh}
\DeclareMathOperator{\Mon}{Mon}
\DeclareMathOperator{\uMon}{uMon}
\DeclareMathOperator{\PMon}{PMon}
\DeclareMathOperator{\Aut}{Aut}
\DeclareMathOperator{\FR}{FR}

\begin{document}

\subjclass[2020]{
    57K10, 
    57K18, 
    58D10, 
    57M07 
    (primary),
    55U25 
    (secondary)
}
\keywords{Khovanov homology, motion group, monodromy group, split link}

\title{Khovanov monodromy groups via motions}

\author[Gabriel Corrigan]{Gabriel Corrigan}
\email{g.corrigan.1@research.gla.ac.uk}

\author[Livio Ferretti]{Livio Ferretti}
\email{livio.ferretti@glasgow.ac.uk}

\author[Isacco Nonino]{Isacco Nonino}
\email{i.nonino.1@research.gla.ac.uk}

\author[Susanna Terron]{Susanna Terron}
\email{s.terron.1@research.gla.ac.uk}


\thanks{This work was developed and written without the use of artificial intelligence models. The authors do not consent to the processing of this document by artificial intelligence models.
\newline \indent \textsc{School of Mathematics and Statistics, University of Glasgow, University Place, Glasgow, G12 8QQ}}

\begin{abstract}
    For any link, we define a monodromy map from the motion group of the link to the group of automorphisms of the link's Khovanov homology. The image of this map is the \emph{unoriented monodromy group} of the link. This map allows us to convert results concerning motion groups of links into ones about their Khovanov monodromy. In particular, we use a characterisation of the motion groups of split links to write their unoriented monodromy groups as an explicit semidirect product in terms of their unsplit pieces. We give some demonstrative examples, computing the unoriented monodromy groups of unlinks, Hopf links, and split links composed of pieces thereof.
\end{abstract}
\maketitle

\section{Introduction}

Khovanov homology is a powerful link invariant. To each oriented diagram~$D$ of a link in~$\R^3$, it associates in a purely combinatorial way a chain complex~$C(D)$ of free abelian groups, whose homology~$\Kh(D)$ is, up to isomorphism, an invariant of the link\footnote{Throughout the paper, we work with Khovanov homology with integer coefficients.}. Since its introduction in \cite{Khovanov_OG}, it has had remarkable applications in low-dimensional topology. One striking peculiarity of this theory is that its definition is purely diagrammatic, and the resulting invariant is indeed very useful for understanding combinatorial properties of links, yet it also detects strong topological information \cite{Rasmussen, Piccirillo, Hayden_Sundberg, Ren-Willis}. The key feature enabling extraction of topological information from such a combinatorial gadget is functoriality: oriented cobordisms in~$\R^3\times [0,1]$ between links induce morphisms on Khovanov homology. Such induced morphisms appear naturally in the theory and were defined by Khovanov himself, who moreover conjectured that, up to sign, they should be invariant under ambient isotopy of the link cobordism. This conjecture was later proved by Jacobsson, provided the ambient isotopy of the cobordism fixes the boundary setwise \cite{Jacobsson}.

Let us quickly recall the construction of the induced morphisms on homology. Since Khovanov homology is defined starting from link diagrams, to define the induced maps one needs a diagrammatic description of cobordisms. Given two oriented links~$L_0$ and~$L_1$ presented by oriented diagrams~$D_0$ and~$D_1$, a generic cobordism~$\Sigma$ from~$L_0$ to~$L_1$ can be presented as a finite sequence of diagrams, starting with~$D_0$ and ending with~$D_1$, where two successive diagrams in the sequence are related by a planar isotopy, a Reidemeister move, or a Morse move. Such a sequence of diagrams is called a \textit{movie}. Moreover, any two movies representing ambiently isotopic cobordisms can be related by a sequence of Carter--Saito \textit{movie moves} \cite{Carter-Saito-1}. To associate a morphism at the level of Khovanov homology to a cobordism, one thus needs to define maps induced by Reidemeister and Morse moves, and use movie moves to check that the morphisms induced by movies representing isotopic cobordisms agree up to sign. This is the main content of Jacobsson's paper \cite{Jacobsson}.
Given an oriented cobordism~$\Sigma$ as above, the associated morphism is well-defined up to sign, and we denote it by~$\Kh(\Sigma)$.

Crucially, the requirement that one only considers cobordisms up to ambient isotopy \emph{rel.~boundary} is necessary to get well-defined induced morphisms. The map induced by a trivial cobordism -- i.e.~a cobordism of the form~$L \times [0,1]$ -- is trivial. On the other hand, there are cobordisms that are ambiently isotopic to the trivial cylinder, \emph{albeit not rel. boundary}, which do not induce the trivial map on Khovanov homology \cite{Jacobsson}. This observation leads to considering the morphisms induced on Khovanov homology by those cobordisms whose boundary consists of two copies of the same link~$L$ and which are ambiently isotopic (not rel. boundary) to the trivial cobordism~$L \times [0,1]$. From the point of view of movies, these cobordisms correspond to so-called \textit{circular movies}.

Let~$D$ be an oriented link diagram. A \textit{circular movie} starring~$D$ is a movie whose first and last stills are both~$D$ and that does not involve any Morse move. For an oriented link~$L$ in~$\R^3$ represented by a diagram~$D$, the \emph{monodromy group} of~$D$ is the subgroup~$\Mon(D) \leq \Aut(\Kh(D))$ consisting of isomorphisms induced by circular movies from~$D$ to itself.

The monodromy group is a link invariant first defined by Jacobsson, who moreover proved that it is in general not trivial; for instance, the knot~$8_{18}$ has non-trivial monodromy \cite{Jacobsson}. The presence of non-trivial monodromy is a sometimes overlooked but crucial point of the theory, being a witness of the lack of naturality of Khovanov homology. In fact, Khovanov homology truly only associates to each link an \emph{isomorphism class} of groups, as opposed to the concrete group associated to a diagram. While this subtle distinction is irrelevant when considering Khovanov homology simply as a link invariant, it becomes important when working with functoriality.

Since its introduction, the monodromy group has been, to the best of our knowledge, mostly ignored. One of the goals of this paper is to reverse this trend and initiate a systematic study of monodromy groups. Our starting observation is that the notion of circular movie clearly reminds one of a \textit{motion of a link}. The group of motions of a submanifold~$N$ inside a manifold~$M$ was first introduced and studied in  \cite{Dahm62,Goldsmith}. Roughly speaking, a motion of~$N$ in~$M$ is a path of diffeomorphisms of~$M$ which begins at the identity and ends at a diffeomorphism which preserves~$N$ setwise, and the \textit{motion group}~$\mathcal{M}(N,M)$ is the group of relative homotopy classes of such motions. We will always care about motions of a link~$L$ in~$\R^3$, so we simply write~$\mathcal{M}(L) = \mathcal{M}(L, \R^3)$. We refer to Section~\ref{section:prelim_motion_groups} for precise definitions. Indeed, we will see that circular movies are just diagrammatic representations of motions; see Proposition~\ref{prop:motions_to_circular_movies}. The key idea is thus to connect the theory of monodromy groups with that of motion groups of links.

In order to be able to use results on unoriented motion groups (for which the theory is more developed than for oriented motions), we introduce the \textit{unoriented monodromy group}~$\uMon(D)$ of a diagram~$D$; see Section~\ref{section:motions_to_monodromies}. This is a group that contains Jacobsson's monodromy group as a subgroup, and consists of maps induced by cobordisms that, while preserving the orientation of the link, might change the orientations on the diagrams. Moreover, due to the sign ambiguity in the functoriality of~$\Kh$, we will work with~$\PuMon(D)$ rather than~$\uMon(D)$ itself. Here~$\PuMon(D)$ denotes the \emph{projective unoriented monodromy group}, i.e.~the quotient~$\uMon(D) / \{\pm1\}$. The connection we draw is the following structural theorem.

\begin{customthm}{A}\label{thm:motions_to_monodromies}
    There is a well-defined surjective group homomorphism~$$\mu \colon \mathcal{M}(L) \twoheadrightarrow \PuMon(L),$$
    defined as~$\mu \colon \gamma \mapsto \Kh(\Sigma_{\gamma})$, where~$\Sigma_{\gamma}$ is a generic cobordism associated to the motion~$\gamma$. We call this map the \emph{Khovanov motion picture map}.
    In particular,~$\mu$ induces the isomorphism~$$ \PuMon(L) \cong \faktor{\calM(L)}{\ker(\mu)}.$$
\end{customthm}

The value of such a connection is twofold. On the one hand, this provides an alternative to the Dahm homomorphism \cite{Dahm62}, which is induced by the action of motions on the fundamental group of the link complement. While we expect the Khovanov motion picture map to be a coarser invariant than the Dahm homomorphism, it has the advantage of being algorithmically computable and resulting in simple matrices. We therefore believe it should be of interest to the motion group community, as it may help in detecting nontrivial motions. 

On the other hand, by studying the kernel of~$\mu$ one can use knowledge of motion groups to compute monodromy groups. As an example of such computations, we apply results of Boyd-Bregman \cite{Boyd-Bregman} to study the unoriented monodromy group of split links. Below,~$\calP_L$ denotes the subgroup consisting of permutations of the link pieces which may only permute ambiently isotopic pieces; cf. Theorem~\ref{thm:motion_group_split_link} and the subsequent discussion.

\begin{customthm}{B}\label{thm:monodromy_group_split_links}
    Let~$L$ be a split link,~$L=\sqcup_iL_i$. The projective unoriented monodromy group of~$L$ is 
   ~$$ \PuMon(L)\cong \left(\prod _i\mu (\calM(L_i))\right)\rtimes \calP_L.$$
    In particular, we obtain that the unoriented monodromy group of a split link is a semidirect product of a permutation subgroup and a subgroup depending only on the actions of motions of single pieces. In the case where the Khovanov homology of the link pieces~$L_i$ is torsion-free, we obtain (Corollary \ref{cor:refined formula when there is no torsion}) the refined formula
   ~$$ \PuMon(L)\cong \left(\prod _i \PuMon(L_i)\right)\rtimes \calP_L.$$
\end{customthm}

As an application of Theorem \ref{thm:monodromy_group_split_links}, we completely determine the (projective) unoriented monodromy group of links in~$\mathbb{R}^3$ which consist of a split union of~$n$ unknots and~$m$ Hopf links. 
To the best of our knowledge, the monodromy group had so far been fully computed only for the unknot and~$2$-components unlink \cite{Collari}, and this is the first infinite family of links for which the unoriented monodromy group is determined. In the following,~$S_k$ denotes the symmetric group on~$k$ symbols.

\begin{customthm}{C}\label{thm:monodromy_group_hopf-unknots}
    Let~$L$ be a split link composed of~$n$ unknots and~$m$ Hopf links. Then we have 
   ~$$\PuMon(L)\cong \left(\Z_2^n \times (\Z_2\times \Z_2)^m\right) \rtimes (S_n\times S_m).$$
\end{customthm}

As a closing remark, note that we chose to work with motions of unoriented submanifolds, and thus define unoriented monodromy groups, as this seems the most natural setting from the motion group viewpoint. One can also define the \textit{oriented motion group}~$\mathcal{M}_{or}(L)$, and the proof of Theorem~\ref{thm:motions_to_monodromies} similarly yields an \emph{oriented Khovanov motion picture map}~$\mu_{or} \colon  \mathcal{M}_{or}(L) \twoheadrightarrow \PMon(L)$. Moreover, the two motion picture maps fit in the following commutative diagram.
\[\begin{tikzcd}
	{\mathcal{M}_{or}(L)} & {\mathcal{M}(L)} \\
	{\PMon(L)} & {\PuMon(L) } \\
	\arrow[two heads, from=1-1, to=2-1]
	\arrow[two heads, from=1-2, to=2-2]
	\arrow[hook, from=1-1, to=1-2]
    \arrow[hook, from=2-1, to=2-2]
\end{tikzcd}\]
 Therefore, our techniques can equally well be used to study the classical monodromy group.
 
\subsection*{Structure of the paper}
The paper is organised as follows. In Section~\ref{section:prelim_motion_groups} we recall some relevant facts about motion groups.  In Section~\ref{section:motions_to_monodromies} we introduce the unoriented monodromy group, study the relationship between motion groups and Khovanov homology, and prove Theorem \ref{thm:motions_to_monodromies}. In Section \ref{section:monodromy_split_links} we apply these techniques to prove Theorem \ref{thm:monodromy_group_split_links} and Theorem \ref{thm:monodromy_group_hopf-unknots}.

\subsection*{Acknowledgments}
The authors thank Philipp Bader, Carlo Collari, Naageswaran Manikandan, and Brendan Owens for useful conversations and insights, and Rachael Boyd for suggesting the application of \cite[Theorem A]{Boyd-Bregman}. GC, IN, and ST thank their doctoral supervisors for their support and guidance.

LF acknowledges support by the Swiss National Science Foundation Postdoc.Mobility fellowship 225434. IN was supported by EPSRC Studentship No. ~EP/W524359/1. ST is supported by the EPSRC [grant number EP/Y035232/1], Centre for Doctoral Training in Algebra, Geometry and Quantum Fields (AGQ).

\section{Preliminaries on motion groups}\label{section:prelim_motion_groups}

In this section, we define motion groups and recall some relevant theorems. For some additional background, see \cite{Goldsmith, Boyd-Bregman}. In what follows,~$M$ is a smooth manifold,~$N\subset M$ a smooth submanifold,~$\diff_c(M)$ denotes the group of compactly supported diffeomorphisms of~$M$, and~$\diff_c(M, N)$ is the subgroup of those diffeomorphisms preserving~$N$ setwise.

\begin{definition}\label{def:smooth motion}
    A path~$\gamma = \{\gamma_t\}_{t \in [0,1]}$ in~$\diff_c(M)$ such that~$\gamma_0 = \id_M$ and~$\gamma_1 \in \diff_c(M, N)$ is called a \emph{motion of $N$ in $M$}.  
\end{definition}


\begin{definition}\label{def:stationary_motion}
A motion~$\gamma$ is \emph{stationary} if~$\gamma_t \in \diff_c(M, N)$ for all~$t \in [0, 1]$.
\end{definition}

The \emph{product}~$\kappa \circ \gamma$ of two motions~$\gamma, \kappa$ of~$N$ in~$M$ is given by
\begin{equation*}
    (\kappa \circ \gamma)_t =
    \begin{cases}
        \gamma_{2t} & 0 \leq t \leq \frac{1}{2} \\
        \kappa_{2(t-\frac{1}{2})} \circ \gamma_1 & \frac{1}{2} \leq t \leq 1.
    \end{cases}
\end{equation*}

In other words: do~$\gamma$ twice as fast, and then do~$\kappa$ twice as fast translated by~$\gamma_1$. The \emph{inverse}~$\gamma^{-1}$ of a motion~$\gamma$ is given by~$(\gamma^{-1})_t = \gamma_{1-t} \circ \gamma_1^{-1}$.

\begin{definition}\label{def:equivalent_motions}
Two motions~$\gamma, \kappa$ are said to be \emph{equivalent} (written~$\gamma \equiv \kappa$) if~$\kappa^{-1} \circ \gamma$ is homotopic relative to the endpoints to a stationary motion. 
\end{definition}

The set of equivalence classes of motions forms a group with operation the product defined above. Note that it is simply the relative fundamental group~$\pi_1 \left( \diff_c(M), \diff_c(M, N), \id_M \right)$. We remark that although \emph{a priori} the relative fundamental group is simply a \emph{set} of homotopy classes of paths, this set can be given a group structure in two ways, since~$\diff_c$ is a topological group. One is the product just defined; the other is by pointwise composition~$(\kappa \cdot \gamma)_t = \kappa_t \circ \gamma_t$. One can show that the two paths~$\kappa \cdot \gamma$ and~$\kappa \circ \gamma$ are homotopic relative to the endpoints. 

\begin{definition}\label{def:smooth motion group}
    The \emph{(smooth) motion group of~$N$ in~$M$} is the relative fundamental group 
    \[\motion{N}{M} \coloneqq \pi_1 \left( \diff_c(M), \diff_c(M, N), \id_M \right).\]
\end{definition}

The equivalence relation on motions can be rephrased as follows \cite[Proposition 2.8]{Goldsmith}. Let~$\gamma, \kappa$ be motions of~$N$ in~$M$. Then~$\gamma \equiv \kappa$ if and only if~$\gamma$ is homotopic to a motion~$\gamma'$ of~$N$ such that~$\gamma'_t(N) = \kappa_t(N)$ for all~$t \in [0, 1]$.

Motion groups are helpfully understood as fundamental groups of embedding spaces. Let~$\emb(N, M)$ denote the space of smooth embeddings of a smooth manifold~$N$ into~$M$. Equipping~$\emb(N, M)$ with the~$C^\infty$-Whitney topology makes it a principal~$\diff(N)$-bundle, where~$\diff(N)$ acts by precomposition. The quotient 
\[\sub(N, M) \coloneqq \faktor{\emb(N, M)}{\diff(N)}\]
is the \emph{unparametrised embedding space}. 

Fix a favoured embedding~$i \colon N \hookrightarrow M$ to be a chosen basepoint of~$\emb(N, M)$, and its image~$i(N)$ to be a chosen basepoint of~$\sub(N, M)$. Now, let~$\calE(i)$ denote the connected component of~$\sub(N, M)$ which contains the basepoint~$i(N)$. Wattenberg \cite{WattenbergSmoothMotionsUnlink72} showed that the motion group is in fact isomorphic to the fundamental group of the unparametrised embedding space.

\begin{theorem}[\cite{WattenbergSmoothMotionsUnlink72}, Lemma 1.4]\label{thm:smooth motion group is pi_1 of embedding space Wattenberg}
   ~$\motion{i(N)}{M} \cong \pi_1\left(\calE(i), i(N)\right)$.
\end{theorem}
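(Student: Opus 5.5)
The plan is to realise the motion group as a relative homotopy group of a fibration and compare it with $\pi_1$ of the base. Write $G = \diff_c(M)$ and $H = \diff_c(M,N)$ with $N = i(N)$. Consider the orbit map
$$p\colon G \to \sub(N,M), \qquad \phi \mapsto \phi(i(N)).$$
Its image lies in the component $\calE(i)$, since $G$ is path-connected through the identity component. More precisely, $p$ restricted to the identity component $G_0$ maps into $\calE(i)$. The stabiliser of the basepoint $i(N)$ is exactly $H$.

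The first key step is to invoke the Cerf--Palais fibration theorem: the action of $\diff_c(M)$ on $\emb(N,M)$, and hence on $\sub(N,M)$, is locally trivial. Concretely, local sections of $p$ exist near $i(N)$, obtained via tubular neighbourhoods and isotopy extension. This makes
$$H \to G \to \calE(i)$$
a locally trivial fibre bundle. We also need $p$ to hit all of $\calE(i)$; this follows from the isotopy extension theorem, because any path in $\sub(N,M)$ lifts to a path in $\emb(N,M)$ and then extends to an ambient compactly supported isotopy. We are implicitly assuming $N$ compact so that compact support causes no trouble.

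The second step is the standard identification $\pi_1(G,H,\id) \cong \pi_1(\calE(i), i(N))$ for a Serre fibration. In one direction, a motion $\gamma$ is sent to the loop $t \mapsto \gamma_t(i(N))$, which is closed because $\gamma_1 \in H$. This map is well defined on equivalence classes, because a homotopy rel endpoints to a stationary motion projects to a null-homotopy of loops. It is also compatible with Goldsmith's reformulation: equivalent motions have homotopic images.

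Surjectivity follows from the path-lifting property: a loop at $i(N)$ lifts to a path in $G$ starting at $\id_M$, ending in $p^{-1}(i(N)) = H$, which is precisely a motion. Injectivity follows from homotopy lifting. If the projected loop is null-homotopic, lift the null-homotopy (a map from a square, relative to its edges) to obtain a homotopy rel endpoints from $\gamma$ to a path lying entirely in $H$, i.e.\ a stationary motion. One checks that the product of motions maps to the concatenation of loops, using the remark that $\kappa\circ\gamma$ projects to $\gamma$-loop followed by $\kappa_t(\gamma_1(N)) = \kappa_t(N)$; hence the bijection is a group isomorphism.

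The main obstacle is the first step: establishing that $p$ is a fibration (local triviality) in the $C^\infty$ Whitney topology on these infinite-dimensional spaces. The first approach would be to cite Palais' theorem on local cross-sections for $\diff(M)$ acting on embeddings, then descend to the quotient by $\diff(N)$ using that $\emb \to \sub$ is a principal bundle. If needed, one could instead work with the long exact sequence of the pair $(G,H)$ and the fibration $H \to G \to \calE(i)$ directly, appealing to the five lemma.
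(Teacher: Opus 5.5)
The paper does not prove this statement. It imports it as a black box from Wattenberg's Lemma 1.4, so there is no internal argument to compare yours with.

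Your argument is the standard one, and it is correct for compact $N$, which covers the links the paper uses. You build the orbit fibration $\diff_c(M,N)\to\diff_c(M)\to\sub(N,M)$ by composing the Cerf--Palais local sections of $\diff_c(M)\to\emb(N,M)$ with local sections of the principal bundle $\emb(N,M)\to\sub(N,M)$. The Cerf--Palais sections are supported near $i(N)$, hence compactly supported. You then use the bijection $\pi_1(E,F,e_0)\to\pi_1(B,b_0)$, which holds for any Serre fibration.

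Three small repairs are needed.

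\emph{Connectivity.} Your sentence about the image of $p$ is inaccurate, because $\diff_c(M)$ need not be connected. Restrict $p$ to $E=p^{-1}(\calE(i))$, whose fibre over $i(N)$ is exactly $\diff_c(M,N)$. Paths and homotopies starting at $\id_M$ never leave the identity component, so the relative $\pi_1$ is unchanged.

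\emph{Product convention.} In the paper, $\kappa\circ\gamma$ performs $\gamma$ first. Your own computation then gives $p_*(\kappa\circ\gamma)=p_*(\gamma)\cdot p_*(\kappa)$ for the usual concatenation of loops. So $[\gamma]\mapsto[p\circ\gamma]$ is an anti-isomorphism; compose it with inversion in $\pi_1$ to get an isomorphism.

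\emph{Injectivity.} Make explicit which edges of the square you prescribe the lift on. Prescribe it on the three edges $s=0$, $t=0$, $t=1$ (as $\gamma$, $\id_M$, and the constant $\gamma_1$), not on all four. This is just the homotopy lifting property for the pair $(I,\partial I)$. The free edge $s=1$ then lies over $i(N)$, hence in $\diff_c(M,N)$. That gives a homotopy rel endpoints to a stationary motion, as the paper's definition of equivalence requires.
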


In this paper, we are interested in motion groups of links in~$\R^3$. Since the ambient space will always be~$\R^3$, we drop it from the notation and simply write~$\calM(L) \coloneqq \calM(L,\R^3)$. Such groups are hard to compute in general, but have been explicitly studied in a few cases, as we now review. These computations will be instrumental in the proofs of our main theorems. 

The motion group of the unknot was computed by Dahm in his thesis to be~$\mathbb{Z}_2$ \cite{Dahm62, Goldsmith, BrendleHatcherRingsWickets13}. [In fact, he computed the motion group of the~$n$-component unlink to be isomorphic to the symmetric automorphism group of the free group~$F_n$, also known as the \emph{loop braid group}.] The motion group of the Hopf link is computed to be the quaternionic group~$Q_8$ \cite{DamianiKamadaRingHTrivialLinks19, Boyd-Bregman-Hopf}.

Let~$L$ be a \emph{split link} in~$\R^3$. Recall that this means that~$L$ can be written as a disjoint union~$L = L_1 \sqcup \cdots\sqcup L_n$, where the \emph{pieces}~$L_i \neq \emptyset$ are contained in pairwise disjoint balls~$B_i$. We identify~$L$ with the image of a preferred embedding~$i\colon \bigsqcup S^1 \rightarrow \R^3$, which we use as the basepoint of the embedding space. Boyd--Bregman \cite{Boyd-Bregman} give an explicit description of the motion group of a split link in~$\mathbb{R}^3$.

\begin{theorem}{\cite[Theorem A]{Boyd-Bregman}}\label{thm:motion_group_split_link}
    The  motion group~$\calM(L)$ of a split link~$L\subset\R^3$ is isomorphic to~$$\left(\FR(L) \rtimes \left(\prod_i\calM(L_i) \right)\right)\rtimes \calP_{L}.$$
\end{theorem}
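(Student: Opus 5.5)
Since the statement is a citation of \cite[Theorem A]{Boyd-Bregman}, I would not re-derive it in full. What follows is a sketch of how I would prove it, working throughout with the model of Theorem~\ref{thm:smooth motion group is pi_1 of embedding space Wattenberg}, namely $\calM(L)\cong\pi_1(\calE(i),L)$. The semidirect product has three layers, and I would peel them off from the outside in.

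\textbf{The permutation layer.} The endpoint $\gamma_1$ of any motion preserves $L$ setwise, so it permutes the pieces $L_i$. One checks that $\gamma_1$ carries each piece onto an ambiently isotopic piece, for example by tracking the separating spheres $\partial B_i$ along the motion. This gives a homomorphism $\calM(L)\to\calP_L$. To split it, I would place ambiently isotopic pieces as translates of one another inside congruent balls. A transposition of two such pieces is then realised by a rigid motion exchanging the two balls along disjoint paths, and these rigid motions satisfy the symmetric group relations up to equivalence. The kernel is the pure motion group $\calM_P(L)$, consisting of motions that return every piece to itself.

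\textbf{The piece layer.} Let $\calE^{\mathrm{sp}}\subset\calE(i)$ be the subspace of embeddings that come with a system of disjoint balls each containing exactly one piece. For a component with pieces fixed, I would show that forgetting the balls, i.e.\ the map from the space of pairs (embedding, ball system) to $\calE(i)$, is a homotopy equivalence. The fibre here is the space of such sphere systems in the complement, and I expect it to be contractible because the pieces are non-split, so each $\R^3\setminus L_i$ is irreducible. This step relies on Hatcher-type results on spaces of essential spheres.

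Granting that equivalence, there is a fibration
\[
\prod_i \sub(L_i,B_i)\longrightarrow \{\text{embeddings with ball systems}\}\longrightarrow \mathrm{Conf}(\text{balls}),
\]
whose base is a configuration space of disjoint labelled balls. Taking the long exact sequence gives a map $\calM_P(L)\to\prod_i\calM(L_i)$, which records the motion each piece performs inside its own ball. This map is split by performing motions of the pieces independently inside the disjoint balls $B_i$.

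\textbf{The kernel FR(L).} The kernel of $\calM_P(L)\to\prod_i\calM(L_i)$ consists of motions in which the pieces move essentially rigidly. Each piece $L_i$ may be slid along a loop in the complement of another piece $L_j$, and this is the Fuchs--Rabinovitch-type group $\FR(L)$. To identify it, and to prove injectivity on this subgroup, I would use the Dahm homomorphism to $\Aut\bigl(\pi_1(\R^3\setminus L)\bigr)=\Aut\bigl(*_i\pi_1(\R^3\setminus L_i)\bigr)$. Under this map, motions in $\FR(L)$ act by partial conjugations, while motions in $\prod_i\calM(L_i)$ act by factor automorphisms. The Fuchs--Rabinovitch presentation of the automorphism group of a free product then controls the kernel and the relations. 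The extension is split with $\FR(L)$ normal, since conjugating a slide by a motion inside one ball is again a slide.

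\textbf{Expected main obstacle.} The hardest step is the contractibility of the space of separating sphere systems, i.e.\ the claim that $\calE(i)$ deformation retracts onto embeddings with a coherent ball system. This is where irreducibility of the piece complements and Hatcher's Smale-conjecture machinery are needed. It is also what makes the restriction map to $\prod_i\calM(L_i)$ well defined, even though during a general motion the pieces pass around one another.
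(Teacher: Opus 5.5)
The paper does not prove this statement. It quotes \cite[Theorem A]{Boyd-Bregman} and uses it as a black box, so your sketch has to stand on its own. Your permutation layer is fine, provided the exchanges are done by translations, so that they factor through $\pi_1$ of an unordered configuration space of points in $\R^3$.

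The genuine gap is in the piece layer. The space of ball systems $B_1,\dots,B_n$ with $L_i\subset B_i$ in the complement of a fixed $L$ is \emph{not} contractible, and your own fibration shows this. The space of disjoint labelled balls is simply connected, like ordered configurations of points in $\R^3$. So the long exact sequence of $\prod_i\sub(L_i,B_i)\to\mathcal{X}\to\mathrm{Conf}$ gives a surjection $\prod_i\calM(L_i)\twoheadrightarrow\pi_1\mathcal{X}$. This is a map \emph{into} $\pi_1\mathcal{X}$, not a map $\calM_P(L)\to\prod_i\calM(L_i)$ as you wrote. If forgetting the balls were a homotopy equivalence, $\calM_P(L)$ would therefore be a quotient of $\prod_i\calM(L_i)$, leaving no room for $\FR(L)$. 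For the two-component unlink this is absurd: the two ring-through-ring motions generate a free group of rank two in $\calM_P(U\sqcup U)$, while $\calM(U)\times\calM(U)\cong\Z_2^2$. Concretely, when $L_1$ slides through $L_2$, the ambient isotopy deforms $B_2$ around the track of $L_1$. The resulting system is not isotopic to $(B_1,B_2)$ in $\R^3\setminus L$. So the space of systems has infinitely many components, and the slides in $\FR(L)$ move between them. Irreducibility of the piece complements can at best make each component contractible; it cannot make the space connected.

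This is why \cite{Boyd-Bregman}, following \cite{BrendleHatcherRingsWickets13}, work with a semi-simplicial space of separating systems and prove that its \emph{realisation} is homotopy equivalent to $\calE(L)$. Roughly, loops in the space of vertices produce $\prod_i\calM(L_i)$ and the permutations. The extra generators coming from higher simplices, which join different separating systems, are the slides making up $\FR(L)$.

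Two further points. First, the homomorphism $\calM_P(L)\to\prod_i\calM(L_i)$ needs no sphere systems: it is induced by forgetting all pieces but one. Second, identifying its kernel with $\FR(L)$ via the Dahm homomorphism requires an injectivity statement that you do not supply. The Fouxe-Rabinovitch presentation is purely algebraic, so it cannot show that a motion in the kernel acting trivially on $\pi_1(\R^3\setminus L)$ is trivial. Moreover, the Dahm map is not injective in general. For the Hopf link, the $2\pi$ rotation is the central element $-1\in Q_8\cong\calM(H)$, yet it acts trivially on $\pi_1$ of the complement. Injectivity on the kernel therefore needs genuine $3$-manifold input about mapping classes of the reducible complement.
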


We spend some words to identify the groups appearing in the statement.

\begin{itemize}
    \item~$\calP_{L}$: there is a natural action of the symmetric group~$S_n$ on the set of the~$n$ pieces by permutations, and~$\calP_{L}$ is the subgroup of permutations which only permute link pieces when they are ambiently isotopic.
    
    \item~$\prod_i\calM(L_i)$: this is the direct product of the motion groups of the single link pieces of the split link.
    
    \item~$\FR(L)$: this is a subgroup of~$\pi_1\left(\R^3\setminus L\right)$ generated by a certain set of conjugations. Precisely, if ~$H_i=\pi_1(\R^3 \setminus L_i)$, then~$\pi_1\left(\R^3 \setminus L\right) \cong H_1 \ast \cdots \ast H_n$, and~$\FR(L)$ is generated by conjugations of a factor~$H_i$ by an element~$g \in H_j$, where~$i\neq j$. This is called the \emph{Fouxe-Rabinovitch} group of~$L$ after \cite{FouxeRabinovitchAutsFreeProductsI40, FouxeRabinovitchAutsFreeProductsII41}. The geometric description of such motions is quite simple: these correspond to taking the link piece~$L_i$ and \emph{looping it inside} the link piece~$L_j$ along the loop~$g$ of~$H_j$, as depicted in Figure \ref{figure:conjugation_motion}. 
\end{itemize}

\begin{figure}[htb]
\centering
\begin{tikzpicture}
\node[anchor=south west,inner sep=0] at (0,0){\includegraphics[width=10cm]{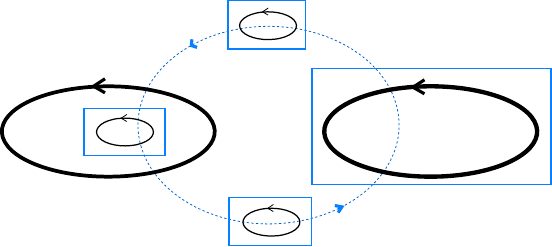}};
\end{tikzpicture}
\caption{A picture of the conjugation motion for the 2 component unlink. The rightmost component is looping inside the leftmost.}
\label{figure:conjugation_motion}
\end{figure}

\section{(Projective) unoriented monodromy groups via motion groups}\label{section:motions_to_monodromies}

We will now study the relationship between motion groups and monodromy groups. Throughout this section and the rest of the paper the links we are considering are always embedded in~$\R^3$, and we always fix a preferred projection~$p \colon \R^3 \to \R^2$. We choose the basepoint for the motion group to be the image of an embedding such that its projection via~$p$ is a link diagram~$D$.

\subsection{The unoriented monodromy group}\label{subsec:unoriented monodromy group}

As discussed in the introduction, we need to introduce an \emph{unoriented monodromy group}. The following definition is key.

\begin{definition}\label{def:diagram_collection}
    Let~$L$ be the image of an oriented link embedding in~$\R^3$ and~$D$ an oriented diagram for~$L$. Consider the set~$\Dcal=\{D_1, \dots, D_k\}$ of all oriented diagrams for~$L$ having underlying unoriented diagrams coinciding with that of~$D$. We call this set the \emph{diagram collection} of~$D$. 
\end{definition}

Note that the Khovanov chain complex associated to each diagram~$D_i\in\Dcal$ is exactly the same. Let us denote it by~$C$, and let~$\Aut(H_*(C))$ be the group of automorphisms of the corresponding Khovanov homology.

\begin{definition}\label{def: unoriented_circ_movie}
    Let~$D$ be an oriented link diagram and~$\Dcal$ its diagram collection. We define an \textit{unoriented circular movie} of~$\Dcal$ to be a movie with only Reidemeister moves and planar isotopies whose first still is~$D_i$ and last still is~$D_j$, with~$D_i, D_j \in \Dcal$. 
\end{definition}

We can now define the unoriented monodromy group.

\begin{definition}\label{def: unoriented_monodromy}
Let~$L$ be an oriented link and~$D$ a fixed oriented diagram for~$L$. The \emph{unoriented monodromy group} of~$D$ is the subgroup of~$\Aut(H_*(C))$ given by the automorphisms induced by unoriented circular movies of~$\Dcal$. We will denote it by~$\uMon(D)$. We denote by~$\PuMon(D)$ the quotient~$\uMon(D) / \{\pm1\}$.
\end{definition}

In fact, these groups are invariants of the link. We will therefore use the notation~$\uMon(L)$ and~$\PuMon(L)$ without further comment.

\begin{remark}\label{rem:operation}
    We elaborate why this is actually a well-defined subgroup. At first glance, unoriented circular movies are not always composable. This necessitates our careful setup, where composition of unoriented circular movies is as we are about to describe; this will respect composition of automorphisms. Precisely, replacing the first still~$D_i$ of an unoriented circular movie with another~$D_j$ in the same diagram collection gives a different unoriented circular movie which nonetheless induces the same automorphism on~$H_\ast(C)$. Therefore, there is always a choice of orientation on initial stills (which we are free to make) which makes two unoriented circular movies composable. This construction respects composition in~$\Aut(H_\ast(C))$.
\end{remark}

\subsection{The Khovanov motion picture map}\label{subsec:the Khovanov motion picture map}

The first important thing to notice is that there is a deep connection between motions and unoriented circular movies. Our first step is to associate a cobordism to each motion.

\begin{lemma}\label{lem:homotopic_ambient_isotopy}
    To every motion~$\gamma \in\calM(L)$ we can associate a cobordism~$\Sigma_\gamma$, well-defined up to ambient isotopy relative to the boundary.
\end{lemma}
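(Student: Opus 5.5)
Given a motion $\gamma=\{\gamma_t\}_{t\in[0,1]}$ of $L$ in $\R^3$, the natural candidate is the trace
\[
\Sigma_\gamma \coloneqq \bigcup_{t\in[0,1]} \gamma_t(L)\times\{t\} \subset \R^3\times[0,1].
\]
This is the image of the smooth embedding $L\times[0,1]\to\R^3\times[0,1]$, $(x,t)\mapsto(\gamma_t(x),t)$. It is therefore a properly embedded annular cobordism from $L$ to $\gamma_1(L)=L$. The orientation of $L$ transported along $\gamma$ orients $\Sigma_\gamma$. At the top end this orientation may differ from the given one, which is why the unoriented diagram collection was introduced, but the underlying unoriented cobordism is what matters here. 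The plan is to show that the ambient isotopy class rel.\ boundary of $\Sigma_\gamma$ depends only on the class of $\gamma$ in $\calM(L)$. If the path is only continuous rather than smooth in $t$, first smooth it: $\diff_c(\R^3)$ is a Fréchet Lie group, so any path is homotopic rel endpoints to a smooth one, and by the next step the choice does not matter.

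\textbf{Step 1: homotopic motions give isotopic traces.} Let $H\colon[0,1]^2\to\diff_c(\R^3)$, $(s,t)\mapsto H_s(t)$, be a homotopy between $\gamma$ and $\gamma'$ rel endpoints, taken smooth after the same smoothing. Define
\[
\Phi_s(x,t)=\bigl(H_s(t)\circ\gamma_t^{-1}(x),\,t\bigr).
\]
This is a smooth family of compactly supported diffeomorphisms of $\R^3\times[0,1]$ with $\Phi_0=\id$ and $\Phi_s(\Sigma_\gamma)=\Sigma_{H_s}$. Because the homotopy fixes $H_s(0)=\id$ and $H_s(1)=\gamma_1$, each $\Phi_s$ is the identity on $\R^3\times\{0,1\}$. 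So $\Sigma_\gamma$ and $\Sigma_{\gamma'}$ are ambiently isotopic rel boundary.

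\textbf{Step 2: equivalent motions.} By the reformulation of Goldsmith \cite[Proposition 2.8]{Goldsmith} recalled in Section~\ref{section:prelim_motion_groups}, $\gamma\equiv\kappa$ exactly when $\gamma$ is homotopic to a motion $\gamma'$ with $\gamma'_t(L)=\kappa_t(L)$ for all $t$. Since $\Sigma_\gamma$ depends only on the family of subsets $\gamma_t(L)$, this gives $\Sigma_{\gamma'}=\Sigma_\kappa$ literally, and Step 1 gives $\Sigma_\gamma\simeq\Sigma_{\gamma'}$ rel boundary. Here I must be careful about which homotopies are allowed. In the relative fundamental group $\pi_1(\diff_c(\R^3),\diff_c(\R^3,L),\id)$ a homotopy may move the final endpoint within $\diff_c(\R^3,L)$. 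If the endpoint moves, $\Phi_s$ is no longer the identity on the top face, but it still preserves $L\times\{1\}$ setwise there.

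\textbf{Step 3: the main obstacle.} Dealing with that moving endpoint is the delicate point. I would try to show that the proposition's "homotopic" can be taken rel endpoints, as in Definition~\ref{def:equivalent_motions}, where the relevant homotopy is rel endpoints to a stationary motion. Failing that, I would handle the moving endpoint directly. Take the top-face diffeomorphisms $\Phi_s|_{\R^3\times\{1\}}$, which preserve $L$, and undo them with a collar: in $\R^3\times[1-\epsilon,1]$, interpolate from $\Phi_s$ back toward the identity. The interpolated level sets are then $\Phi_s(\gamma_1(L))=\gamma_1(L)$ up to reparametrisation of $L$, so the cobordism is unchanged as a set near the top. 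This modification yields an ambient isotopy fixing the boundary setwise, and the cobordism surface $\Sigma$ restricted near the top collar is a product. I expect some care is needed to make the isotopy genuinely rel.\ boundary, by absorbing the reparametrisation of $L$ into the collar. Once this is done, $\Sigma_\gamma$ is well defined up to ambient isotopy rel boundary, as the statement requires.
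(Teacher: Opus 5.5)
Your construction of $\Sigma_\gamma$ and your overall strategy match the paper's: a homotopy of motions is turned into an isotopy of traces. Your Step~1 is correct and more explicit than the paper's argument. The level-preserving map $\Phi_s(x,t)=(H_s(t)\circ\gamma_t^{-1}(x),t)$ is an ambient isotopy that is the identity on $\R^3\times\{0,1\}$. The paper instead takes a relative homotopy with $H_t(1)\in\diff_c(\R^3,L)$, observes only that $H_t(1)(L)=L$, and invokes isotopy extension.

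The gap is that the proposal stops exactly where equivalence in $\calM(L)$, rather than homotopy rel endpoints, has to be handled. Step~2 needs Goldsmith's homotopy to be rel endpoints, and Step~3 only lists what you would try. As written, you have proved that $\Sigma_\gamma$ is invariant under homotopy rel endpoints, not under $\equiv$. Your collar sketch is also not right as stated. Interpolating ``from $\Phi_s$ back toward the identity'' keeps the level sets equal to $L$ only if the interpolating path stays inside $\diff_c(\R^3,L)$, and you do not say which path that is.

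Both of your suggested fixes do work, and each takes a couple of lines.

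First route. Definition~\ref{def:equivalent_motions} already provides a homotopy rel endpoints from $\kappa^{-1}\circ\gamma$ to a stationary motion $\sigma$. In a topological group, concatenation and pointwise product agree up to homotopy rel endpoints, and so do the two notions of inverse. So you may take this homotopy $K$ to start at $t\mapsto\kappa_t^{-1}\circ\gamma_t$. Then $G_s(t)=\kappa_t\circ K_s(t)$ is a homotopy rel endpoints from $\gamma$ to $\gamma'=\kappa\cdot\sigma$; note that $G_s(1)=\kappa_1\kappa_1^{-1}\gamma_1=\gamma_1$. Moreover $\gamma'_t(L)=\kappa_t(\sigma_t(L))=\kappa_t(L)$. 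Step~1 then gives $\Sigma_\gamma\simeq\Sigma_{\gamma'}=\Sigma_\kappa$ rel boundary, and the orientations agree because they agree at $t=0$.

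Second route. Start from a relative homotopy $H$ from $\gamma$ to $\kappa$, as in the paper, and concatenate each $H_s$ with the path $r\mapsto H_{(1-r)s}(1)$. This path lies in $\diff_c(\R^3,L)$ and ends at $\gamma_1$. The result is therefore a homotopy rel endpoints, and the appended segment traces a product $L\times[0,1]$. Step~1 plus reparametrisation then gives $\Sigma_\gamma\simeq\Sigma_\kappa$ rel boundary, without appealing to Goldsmith. This is the precise form of your collar idea. It also fills in the point about setwise versus pointwise fixing of the boundary, which the paper's proof handles only by noting that $H_t(1)(L)=L$.
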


\begin{proof}
    Given a path~$\gamma = \{\gamma_s\}_{s\in[0,1]}$ in~$\diff_c(\R^3)$ representing a motion in~$\calM(L)$, let us denote by~$\Sigma_\gamma$ the trace~$(\gamma_s(L),s)$ in~$\R^3 \times [0,1]$. Then~$\Sigma_\gamma$ defines a cobordism from~$L$ to~$\gamma_1(L)=L$.
    If~$\gamma$ and~$\kappa$ are two representatives of the same element of~$\calM(L)$, then there exists a relative homotopy~$H$  between them, i.e.~\begin{align*}
        H \colon [0,1]_t \times [0,1]_s & \to \diff_c (\mathbb{R}^3)\\
         (t,s)  & \mapsto H_t(s)
    \end{align*}
    satisfying
    \begin{align*}
        H_0(s)&=\gamma_s\\
        H_1(s)&=\kappa_s\\
        H_t(0)&=\id_{\mathbb{R}^3}\\
        H_t(1)&\in \diff_c(\mathbb{R}^3,L)
    \end{align*}
    for all~$s, t\in[0,1]$.
    
    Consider the map~$$H' \colon L \times  [0,1]_t\times [0,1]_s \to \mathbb{R}^3 \times [0,1]_s$$
    given by 
    ~$$ H'_t(L,s) = (H_t(s)(L),s).$$
    The map~$H'$ is an isotopy between the embeddings of~$L \times [0,1]_s \hookrightarrow \mathbb{R}^3 \times [0,1]$ given by~$H'_0$ and~$H'_1$, which correspond to the cobordisms~$\Sigma_\gamma=(\gamma_s(L),s)$ and~$\Sigma_{\kappa}=(\kappa_s(L),s)$ traced by the two paths.
    Note that the boundary of the cobordisms is fixed throughout the whole isotopy:~$$H'_t(L,0)= (H_t(0)(L),0)=(L,0),$$
    and~$$ H'_t(L,1)=(H_t(1)(L),1)=(L,1).$$
    Using the isotopy extension theorem, we obtain a map~$$ \overline{H} \colon \mathbb{R}^3 \times [0,1]_t\times [0,1]_s \to \mathbb{R}^3 \times [0,1]_s$$ which is an ambient isotopy between the two surfaces and fixes the boundary of the cobordisms.     
\end{proof}

Such cobordisms are actually represented by unoriented circular movies, as we now prove.

\begin{proposition}\label{prop:motions_to_circular_movies}
    Let~$\gamma$ be a smooth motion of an oriented link~$L$. Then the cobordism~$\Sigma_{\gamma}$ is represented by an unoriented circular movie.
\end{proposition}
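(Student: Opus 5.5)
The plan is to perturb the motion to a generic one, read off the trace $\Sigma_\gamma$ as a sequence of diagrams, and check that only planar isotopies and Reidemeister moves occur and that the endpoints lie in the diagram collection $\Dcal$.

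First, I put $\gamma$ in general position with respect to the fixed projection $p$. By Lemma~\ref{lem:homotopic_ambient_isotopy}, $\Sigma_\gamma$ depends only on the class of $\gamma$ up to ambient isotopy rel.~boundary. So we may replace $\gamma$ by any homotopic motion (rel.~endpoints) without changing the cobordism class. The path $s\mapsto \gamma_s(L)$ is a path of links in the unparametrised embedding space $\calE(i)$, starting and ending at $L$. A standard transversality argument (Thom multijet transversality applied to the family $p\circ\gamma_s|_L$) lets us perturb the interior of the path. After perturbation, $p(\gamma_s(L))$ is a regular diagram for all but finitely many $s\in(0,1)$. At each exceptional $s$, exactly one codimension-one singularity occurs: a cusp (kink), a tangency of two strands, or a triple point. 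The endpoints need not be perturbed, since $\gamma_0(L)=\gamma_1(L)=L$ already projects to the regular diagram $D$. We may also assume the path is constant near $s=0,1$.

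Next, I read the movie off this generic path. Between consecutive singular times, the diagrams $p(\gamma_s(L))$ are related by planar isotopy. Crossing a singular time changes the diagram by a Reidemeister I, II, or III move respectively. Since $\gamma_s$ is a diffeomorphism for every $s$, the trace $\Sigma_\gamma$ is a product $L\times[0,1]$ embedded level-preservingly. Consequently there are no critical points of the height function $s$ on $\Sigma_\gamma$, hence no Morse moves. This is just the movie description of Carter--Saito applied to a surface with no saddles, births or deaths.

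Finally, I check the endpoints. Orient $L$ and transport the orientation along the trace. The first still is then $D$ with its given orientation. The last still is $p(\gamma_1(L))=p(L)$, the same unoriented diagram, with orientation induced by $\gamma_1$. Since $\gamma_1$ preserves $L$ only setwise, it may permute components and reverse their orientations. The last still is therefore some $D_j\in\Dcal$, which is exactly the definition of an unoriented circular movie.

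The main obstacle I expect is making the general-position step rigorous. The perturbation must be through motions, i.e.\ through paths in $\diff_c(\R^3)$, not merely through paths of embeddings. I would handle this by using Theorem~\ref{thm:smooth motion group is pi_1 of embedding space Wattenberg}: perturb the loop in $\calE(i)$ generically, then lift the result back to a motion via the isotopy extension theorem, as in the proof of Lemma~\ref{lem:homotopic_ambient_isotopy}.
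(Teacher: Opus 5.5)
Your proposal is correct and follows the paper's argument. The paper also uses the two points that the level-preserving trace $\Sigma_\gamma$ has no critical points of the time coordinate, so its movie has no Morse moves, and that the final still $p(\gamma_1(L))$ differs from $D$ only in orientations, so it lies in $\Dcal$. The paper simply asserts that $\Sigma_\gamma$ is generic, so your transversality step fills in a detail it leaves implicit. Lifting the perturbed path back to a motion is not actually needed: $\Sigma_\gamma$ is only defined up to isotopy rel.~boundary and depends only on the path of submanifolds $s\mapsto\gamma_s(L)$, so you can perturb that path rel.~endpoints directly.
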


\begin{proof}
    The image~$p(L)$ under the fixed preferred projection is an oriented diagram~$D$ of~$L$.
    The trace~$\Sigma_\gamma$ is a cobordism between~$L = \gamma_0(L)$ and~$\gamma_1(L)$. Note that, by definition of motion, the link~$\gamma_1(L)$ coincides with~$L$ as an unoriented submanifold of~$\R^3$, but its orientation may have changed throughout the motion. In other words,~$p\circ\gamma_1(L)$ determines an oriented diagram~$D'$ which belongs to the diagram collection of~$D$. Moreover,~$\Sigma_\gamma$ is a generic surface, and is therefore represented by a movie from~$D$ to~$D'$. The motion~$\gamma$ determines for each time~$t$ an embedded submanifold~$\gamma_t(L)$, so there cannot be any Morse modification in the movie of~$\Sigma_\gamma$. In total, we have constructed an unoriented circular movie associated to~$\gamma$.
\end{proof}

We are now ready to discuss the Khovanov motion picture map.

\begin{thm:motions_to_monodromies}
    There is a well-defined surjective group homomorphism~$$\mu \colon \mathcal{M}(L) \twoheadrightarrow \PuMon(L),$$
    defined as~$\mu \colon \gamma \mapsto \Kh(\Sigma_{\gamma})$. We call this map the \emph{Khovanov motion picture map}.
    In particular,~$\mu$ induces the isomorphism~$$ \PuMon(L) \cong \faktor{\calM(L)}{\ker(\mu)}.$$
\end{thm:motions_to_monodromies}

\begin{proof}
Let~$D$ be an oriented diagram for~$L$. To define~$\mu$, we wish to associate to a motion~$\gamma$ the morphism on Khovanov homology induced by the cobordism~$\Sigma_\gamma$ constructed in Lemma \ref{lem:homotopic_ambient_isotopy}. In order to do this, we ought to give~$\Sigma_\gamma$ an orientation. Motions trace out paths of unoriented submanifolds, so we have to make a choice. We give~$\Sigma_\gamma$ the orientation induced by having~$D$ as its oriented boundary at time~$s=0$. By Lemma \ref{lem:homotopic_ambient_isotopy} the map~$\mu$ is well-defined, and Proposition \ref{prop:motions_to_circular_movies} implies that the image is indeed contained in~$\PuMon(L)$.
 
We now show that~$\mu$ is a group homomorphism. We first need to rephrase the content of Remark \ref{rem:operation} in terms of cobordisms. Given another oriented link diagram~$D'$ in the diagram collection~$\Dcal$, the cobordism~$\Sigma_\gamma$ with the orientation induced by~$D'$ gives the same map on~$H_*(C)$ as the cobordism~$\Sigma_\gamma$ with the orientation induced by~$D$. This has the upshot that when we compose two motions~$\gamma, \kappa \in \calM(L)$, the resulting cobordism~$\Sigma_{\gamma \circ \kappa}$ induces the same element of~$\Aut(H_\ast(C))$ as the composition of those induced by~$\Sigma_\gamma$ and~$\Sigma_\kappa$:
\[\Kh\left(\Sigma_{\gamma \circ \kappa}\right) = \Kh\left(\Sigma_\gamma\right) \circ \Kh\left(\Sigma_\kappa\right).\]

In other words, we have~$\mu(\gamma \circ \kappa)=\mu(\gamma)\circ \mu(\kappa)$, so~$\mu$ is a group homomorphism.
 
All that remains is to prove surjectivity of~$\mu$. By definition, an element of~$\PuMon(L)$ is induced by an unoriented circular movie. In particular, any unoriented circular movie corresponds to an oriented cobordism~$\Sigma \subset \R^3 \times [0,1]$ which has no Morse modifications. This means that at every time~$t \in [0, 1]$, the section~$\Sigma_t \subset \R^3 \times \{t\}$ is an embedding of~$L$ in~$\R^3$, so the cobordism traces a path in the space of embeddings of~$L$. By isotopy extension this gives rise to a motion~$\gamma$ for which~$\Sigma_\gamma=\Sigma$. Hence, the image of~$\gamma$ under~$\mu$ is the automorphism~$\Kh(\Sigma)$, as required. 

The isomorphism~$\PuMon(L) \cong \calM(L)/\ker(\mu)$ is a direct consequence.
\end{proof}

Theorem \ref{thm:motions_to_monodromies} can be summarised as asserting the existence of the short exact sequence 
\begin{equation*}
        1 \to \ker (\mu) \to \calM(L) \to \PuMon(L)\to 1.
\end{equation*} 
In other words, one can understand the projective unoriented monodromy group by studying the kernel of the map~$\mu$ and the motion group of a link. Roughly speaking, this turns the problem into understanding generators of motion groups and their image under~$\mu$ instead of directly studying the algebraic setting of~$\Aut(\Kh(L))$. As recalled in Section~\ref{section:prelim_motion_groups}, the motion group of links has been well studied in certain cases, and we have explicit descriptions of the generators \cite{Dahm62, Goldsmith, Goldsmith_torus, BrendleHatcherRingsWickets13, BellingeriBodinBraidGroupNecklace16, DamianiKamadaRingHTrivialLinks19, Boyd-Bregman-Hopf, Boyd-Bregman}. In Section \ref{section:monodromy_split_links} we show, for certain links, which generators live in the kernel of~$\mu$ and compute the quotient~$\calM (L)/\ker (\mu) \cong \PuMon(L)$.

\section{Monodromy groups of split links}\label{section:monodromy_split_links}

In this section we prove Theorem \ref{thm:monodromy_group_split_links}, and use it to compute the monodromy group of split links composed of unknots and Hopf links.

\subsection{Monodromy groups of split links in general}\label{subsec:monodromy groups of split links in general}
 
Recall first the computation of the motion group of a split link~$L$ given in Theorem \ref{thm:motion_group_split_link} and the subsequent discussion of the pieces appearing in the statement. We start by proving some preparatory lemmas. Henceforth,~$L$ is a split link.

\begin{lemma}\label{lem:fr_kernel}
    The subgroup~$\FR(L)$ is in the kernel of the map~$\mu$.
\end{lemma}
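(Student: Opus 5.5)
It suffices to show that each generator of $\FR(L)$ lies in $\ker(\mu)$, since $\mu$ is a homomorphism by Theorem~\ref{thm:motions_to_monodromies} and the kernel is a subgroup. A generator is a conjugation motion $\gamma$: the piece $L_i$, kept rigid inside a small ball $B_i$, travels along a loop $g\in H_j$ in the complement of $L_j$ and returns to its original position, with $L_j$ and all other pieces fixed. My plan is to show that the induced cobordism map $\Kh(\Sigma_\gamma)$ equals $\pm\id$ by exhibiting a movie for $\Sigma_\gamma$ whose induced map is visibly the identity up to sign.

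First, choose a good diagrammatic representative. Up to equivalence of motions, I may shrink $B_i$ to be tiny and isotope the loop $g$ so that, under the projection $p$, the ball $B_i$ passes over or under strands of $D_j$ and of the other pieces. Pushing $B_i$ far above or below in height does not help, because $g$ genuinely links $L_j$. Instead, I would decompose the loop so that, projected to the plane, the small diagram $D_i$ is dragged across the other strands. Each crossing of a strand through the shadow of $B_i$ then consists of an overpass or underpass of a whole tangle: a strand of $D_j$ passes entirely over or entirely under the small diagram $D_i$. This is a sequence of Reidemeister II and III moves, possibly preceded by a planar isotopy that moves $D_i$ around.

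Second, and this is the main step, I need to show that the map induced by moving a split piece's diagram across a strand, with the strand entirely over or entirely under, is canonical. The cleanest route seems to be a locality or naturality argument. Khovanov homology of a split diagram is $\Kh(D_i)\otimes \Kh(D')$ (over $\Z$ there is a Künneth sequence, but the chain complex is literally a tensor product). Passing a strand entirely over a tangle diagram $D_i$ that is disjoint from it is a composition of Reidemeister II/III moves, and by Jacobsson's movie-move invariance the induced chain map is chain-homotopic to $\pm$ the identity-like map. More concretely, the total movie is a cobordism isotopic rel boundary to the one obtained by performing instead the motion where $L_i$ is pushed far away in the height direction. There a strand can be "passed" by planar isotopy of diagrams with disjoint supports, which induces the identity. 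A concrete way to see this: the cobordism $\Sigma_\gamma$ is the disjoint union of $\Sigma_{i}$ (the trace of $L_i$, which is a rigid translation, so an isotopic-rel-boundary copy of $L_i\times I$ once considered alone) and the product cobordism $L'\times I$, where $L'$ is the rest of the link. Khovanov's cobordism maps are computed locally on split diagrams, provided the movie can be arranged so that the two parts never interact diagrammatically. The obstacle is exactly that linking with $L_j$ in $\R^3\times I$ prevents this arrangement.

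The resolution I would try first is to use the fact that any split diagram can be arranged with $D_i$ lying far away in the plane, and that the conjugation motion can be realised, after an isotopy rel boundary of the trace, as a sequence of "pass a strand of $D_j$ entirely over or under a far-away copy of $D_i$'s disc" moves. For each such move I would check directly that the induced map is $\pm\id$, by comparing with the movie moves of type 15 (or the "strand passing over a tangle" naturality used in proving functoriality). Equivalently, the relevant chain maps for R2/R3 with one strand on top are the standard simple homotopy equivalences, which compose to a map homotopic to $\pm\id$ on $C(D_i)\otimes C(D')$. Since the strand ends up back where it started, composing these local identities gives $\Kh(\Sigma_\gamma)=\pm\id$, i.e.\ $\mu(\gamma)=1$ in $\PuMon(L)$. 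I expect this local naturality verification to be the hardest part.
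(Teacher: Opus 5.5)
Your argument breaks at the step you single out as the main one. It is not true that passing a strand $s$ of $D'$ entirely over (or under) the small diagram $D_i$ induces $\pm\id$ on $C(D_i)\otimes C(D')$, so there are no ``local identities'' to compose.

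The simplest case is already a counterexample. Take $D_i=C$ to be a crossingless circle and let $s$ sweep across it. The movie is: an R2 move creating a bigon between $s$ and the left arc $C_L$ of $C$, a planar isotopy, and an R2$^{-1}$ move removing a bigon between $s$ and the right arc $C_R$. Write $s_{\mathrm{mid}}$ for the arc of $s$ between the two crossings. The two resolutions in the middle homological degree of the intermediate cube are:
(i) $s$ rerouted along $C_L$, plus the circle $s_{\mathrm{mid}}\cup C_R$;
(ii) $s$ rerouted along $C_R$, plus the circle $s_{\mathrm{mid}}\cup C_L$.

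The first R2 map is the inclusion onto (i) plus a correction term landing in (ii). The final R2$^{-1}$ map is the projection onto (ii) plus a correction term read off from (i). Hence every term of the composite passes through one of the correction terms. Let $V=\Z[x]/(x^2)$ with comultiplication $\Delta$ and counit $\epsilon$, let $z$ be the label of the circle through $s$, and let $y$ be the label of $C$. Then
\[
z\otimes y\longmapsto \pm\bigl((zy)\otimes 1-\epsilon(y)\,\Delta(z)\bigr)=\pm\, z\otimes\phi(y),\qquad \phi(1)=1,\ \phi(x)=-x .
\]
So a single pass acts as $\pm(\id\otimes\phi)$ on $C(D')\otimes V$, and this is not $\pm\id$ on homology.

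Your fallback of pushing $L_i$ far away in the height direction does not help either. A diagram records only the projection, so moving $D_i$ across the shadow of a strand always costs R2/R3 moves, whatever the heights.

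The lemma is still true, but for a subtler reason. When $D_i$ is a crossingless circle you could repair your argument by tracking these involutions: a closed loop in the plane meets the projection of the other pieces an even number of times, and $\phi^2=\id$. For an arbitrary piece, however, you would have to compute the pass map in general and show that the contributions along every closed loop cancel. That amounts to showing that Khovanov maps do not see the four-dimensional linking between the parts of a partition-preserving cobordism.

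The paper does not reprove this; it imports it from Gujral--Levine. Their result says that partition-preserving cobordisms between split links whose parts are isotopic rel boundary induce the same map up to sign. For a Fouxe-Rabinovitch generator, the parts of $\Sigma_\gamma$ are:
the trace of $L_i$ travelling along a loop, which is isotopic rel boundary to $L_i\times[0,1]$ once the other pieces are forgotten;
the product cobordism on the remaining pieces.
Hence $\Kh(\Sigma_\gamma)=\pm\Kh(L\times[0,1])=\pm\id$.
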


\begin{proof}
     The result follows from \cite[Theorem 1.2]{Gujral-Levine} on \emph{partition-preserving cobordisms} bewteen split links, stating that two partition-preserving cobordisms induce the same map on Khovanov homology, provided their parts are isotopic rel. boundary. That is, the induced maps on Khovanov homology do not detect the linking between the components of such cobordisms. Now, the key observation is the following: every cobordism associated to a motion in~$\FR(L)$ preserves the partition of the split link and, moreover, every piece is ambiently isotopic rel. boundary to a trivial cobordism. In particular the induced map on Khovanov homology is trivial (up to sign). This concludes the proof.
\end{proof}
 
 \begin{lemma} \label{lem:intersection_symmetric}
    ~$\calP_L \cap \ker (\mu)~$ is trivial.
 \end{lemma}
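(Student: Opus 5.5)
Let $\sigma \in \calP_L$ be a nontrivial permutation of the pieces. My plan is to show directly that $\mu(\sigma)\neq \pm\id$ on $\Kh(L)$. The first step is to fix a concrete representative motion and compute its effect on homology.

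Since $\sigma$ only permutes ambiently isotopic pieces, we may arrange the basepoint embedding so that pieces lying in the same isotopy class are literally translates of one another in disjoint balls $B_i$, with their diagrams translates in the plane. The motion representing $\sigma$ then rigidly translates these balls along disjoint paths in $\R^3$ so that each ball lands on its image. By construction this motion is stationary on each piece up to translation.

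The resulting unoriented circular movie consists of planar isotopies together with Reidemeister II and III moves where pieces pass over or under one another. For a split diagram, the Khovanov complex is the tensor product of the complexes of the pieces. By Gujral--Levine \cite[Theorem 1.2]{Gujral-Levine}, as in Lemma \ref{lem:fr_kernel}, the induced map depends only on the partition-preserving data and not on how the pieces link during the motion. So up to sign, $\Kh(\Sigma_\sigma)$ is the tensor-factor permutation
\[ \Kh(L_1)\otimes\cdots\otimes\Kh(L_n) \to \Kh(L_{\sigma(1)})\otimes\cdots\otimes\Kh(L_{\sigma(n)}), \]
composed with the identifications of isotopic pieces. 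This uses the Künneth formula in the torsion-free case. In general I would argue at chain level: the map is the permutation isomorphism of the tensor-product complexes, possibly with Koszul signs.

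The second step is to check that such a permutation is not $\pm\id$. Pick $i$ with $\sigma(i)=j\neq i$; then $L_i$ and $L_j$ are isotopic. Every link has nonzero Khovanov homology, since its graded Euler characteristic is the Jones polynomial, which is nonzero. Taking the ground field $\mathbb{F}_2$, or using the generators in extreme homological degree, choose two linearly independent classes $a,b\in\Kh(L_i)\cong\Kh(L_j)$. Such classes exist because $\Kh$ of any link has rank at least $2$: for knots reduced $\Kh$ is nonzero and $\Kh\cong\widetilde{\Kh}\otimes V$ over $\mathbb{F}_2$. Then the swap sends $a\otimes b\otimes x$ to $\pm b\otimes a\otimes x$, which is not $\pm a\otimes b\otimes x$.

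To pass from $\mathbb{F}_2$ to integer coefficients, I would note that the universal coefficient map, or simply the free parts in the minimal and maximal quantum gradings, carries the same argument. Alternatively, I would work with the chain-level generators in the lowest quantum degree of the all-zero resolution, which survive to homology.

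I expect the main obstacle to be the first step: rigorously identifying $\mu(\sigma)$ with the tensor-factor permutation. This is why I would invoke Gujral--Levine, since it reduces the computation to a product of identity cobordisms on the pieces, reindexed. A secondary subtlety is making sure the independence argument survives the sign ambiguity and torsion, which I handle by choosing $a\neq\pm b$ in a free summand.
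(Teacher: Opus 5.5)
Your proposal follows the paper's argument: elements of $\calP_L$ act on the K\"unneth subgroup $\Kh(L_1)\otimes\cdots\otimes\Kh(L_n)\subseteq\Kh(L)$ by permuting tensor factors. It also makes explicit two points the paper leaves implicit, namely why the action is a factor permutation (your appeal to Gujral--Levine, as in Lemma~\ref{lem:fr_kernel}) and why a nontrivial permutation is not $\pm\id$, which genuinely needs free rank at least $2$ in each $\Kh(L_i)$. For that rank bound over $\Z$, argue directly that the rational Euler characteristic $(q+q^{-1})J_{L_i}$ is nonzero in two distinct quantum degrees (or use Lee's spectral sequence); the $\mathbb{F}_2$-rank does not control the free rank, and the all-zero-resolution alternative fails whenever an edge out of that vertex is a split.
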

 \begin{proof}
     For a split link~$L = L_1 \sqcup \cdots \sqcup L_n$, the Khovanov chain complex is the tensor product of the chain complexes of the split pieces, and by the K\"unneth formula,~$\Kh(L)$ contains a subgroup~$\Kh(L_1) \otimes \cdots \otimes \Kh(L_n)$, which is natural. The cobordisms induced by elements of~$\calP_L$ act on this subgroup by permutation. In particular, a non-trivial element of~$\calP_L$ induces a non-trivial automorphism of~$\Kh(L)$.
 \end{proof}

We will also need the following algebraic lemmas.

 \begin{lemma}\label{lem:reduction_to_tensors}
     Let~$G_1$ and~$G_2$ be finitely generated~$\Z$-modules with strictly positive rank. Let~$g_1 \colon G_1\rightarrow G_1$ and~$g_2 \colon G_2\rightarrow G_2$ be automorphisms such that~$g_1 \otimes g_2 = \varepsilon \id \colon G_1 \otimes G_2 \rightarrow G_1 \otimes G_2$, with~$\varepsilon\in \{\pm 1\}$. Then~$g_i = \varepsilon_i \id$ with~$\varepsilon_i \in \{\pm 1\}$ and~$\varepsilon_1 \varepsilon_2 = \varepsilon$.
 \end{lemma}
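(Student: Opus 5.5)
The plan is to extract $g_1$ directly from $g_1\otimes g_2$ by tensoring against a free cyclic summand of $G_2$, and then to argue symmetrically for $g_2$. No passage to rational coefficients should be needed.

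First, since $G_2$ is finitely generated with strictly positive rank, the structure theorem gives a splitting $G_2 = \Z e \oplus C$, where $e$ has infinite order and $C$ is some complement. Tensoring with $G_1$ yields the direct sum decomposition
\[
G_1\otimes G_2 \cong (G_1\otimes e)\oplus (G_1\otimes C), \qquad G_1\otimes e\cong G_1 .
\]
Write $g_2(e) = a e + c$ with $a\in\Z$ and $c\in C$. For any $y\in G_1$ we have $(g_1\otimes g_2)(y\otimes e) = a\,g_1(y)\otimes e + g_1(y)\otimes c$. Projecting the identity $(g_1\otimes g_2)(y\otimes e) = \varepsilon\, y\otimes e$ onto the summand $G_1\otimes e\cong G_1$ gives
\[
a\, g_1 = \varepsilon\,\id_{G_1}.
\]

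Next, I will show that $a=\pm1$; this is the step needing a little care, because $G_1$ may have torsion. Pass to the free quotient $G_1/\mathrm{tors}\cong \Z^r$ with $r\ge 1$, on which $g_1$ induces an integer matrix $M$. The relation above becomes $aM=\varepsilon I_r$. Taking determinants gives $a^r\det M = \pm1$, so $a$ is a unit in $\Z$, that is $a=\pm1$. Hence $g_1 = a\varepsilon\,\id$, and we set $\varepsilon_1 = a\varepsilon \in\{\pm1\}$. The same argument with the roles of $G_1$ and $G_2$ exchanged, using a free cyclic summand of $G_1$, gives $g_2=\varepsilon_2\id$ with $\varepsilon_2\in\{\pm1\}$.

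Finally, for the sign relation, we now have $g_1\otimes g_2=\varepsilon_1\varepsilon_2\id$. This must equal $\varepsilon\id$ on $G_1\otimes G_2$. That module contains a summand $\Z e'\otimes \Z e\cong\Z$, on which $\id\neq-\id$. Therefore $\varepsilon_1\varepsilon_2=\varepsilon$. The only real obstacle is the possible torsion, and it is handled by the two devices above: the direct-summand projection, which works on all of $G_1$ including its torsion, and the determinant on the free quotient, which forces $a=\pm1$.
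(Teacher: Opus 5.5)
Your proof is correct, and it is organised differently from the paper's. The paper splits both modules as $G_i=F_i\oplus T_i$, writes each $g_i$ in block form with a free part $\overline{g_i}$, a free-to-torsion component $\tau_i$ and a torsion part $\sigma_i$, and then works through the four summands of $G_1\otimes G_2$ in turn. First, a matrix-entry computation $A^1_{a,b}A^2_{c,d}=\varepsilon\delta_{a,b}\delta_{c,d}$ on $F_1\otimes F_2$ determines both free parts and the sign relation at once. Next, comparing summands kills the $\tau_i$. Finally, $F_1\otimes T_2$ and $T_1\otimes F_2$ pin down the $\sigma_i$.

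You instead tensor against a single free cyclic summand $\Z e$ of $G_2$. Projecting onto $G_1\otimes\Z e\cong G_1$ yields $a\,g_1=\varepsilon\,\id_{G_1}$ on all of $G_1$. So the free part, the torsion part and the mixed component of $g_1$ are all handled in one step, and what remains is showing $a=\pm1$. Your determinant argument on $G_1/\mathrm{tors}$ does this correctly. Projecting onto a free cyclic summand of $G_1$ would work equally well.

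Your route is shorter, and it shows that the automorphism hypothesis is never used: it suffices that $g_1,g_2$ are endomorphisms. The paper's coordinate computation gets the free parts and $\varepsilon_1\varepsilon_2=\varepsilon$ from a single identity, whereas you need the separate (easy) check on $\Z e'\otimes\Z e$ at the end. Both arguments use the positive rank of both factors in an essential way.
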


 \begin{proof}
     Write~$G_i = F_i\oplus T_i$ with the~$F_i$ torsion-free and the~$T_i$ torsion. Then~$g_i$ can be written as~$$g_i(x_i,y_i) = (\overline{g_i}(x_i),\tau_i(x_i)+\sigma_i(y_i)),$$ with~$\overline{g_i}:F_i\to F_i$ automorphisms of the free summands. Recall the standard decomposition~$$G_1\otimes G_2 = (F_1\otimes F_2) \oplus (F_1\otimes T_2)\oplus (T_1\otimes F_2)\oplus (T_1\otimes T_2).$$

     We first show that~$\overline{g_i} = \varepsilon_i \id$ for~$i = 1, 2$ with~$\varepsilon_1 \varepsilon_2=\varepsilon$. The induced map~$\overline{g_1}\otimes \overline{g_2} \colon F_1\otimes F_2 \to F_1\otimes F_2$ is an automorphism and since~$g_1\otimes g_2 = \varepsilon \id$ we deduce that~$\overline{g_1}\otimes \overline{g_2} = \varepsilon \id$. Representing~$\overline{g_i}$ as an integer matrix~$A^i$ and acting on basis elements of~$F_1\otimes F_2$, one deduces~$A^1_{a, b}A^2_{c, d} = \varepsilon\delta_{a,b}\delta_{c, d}$ for all indices~$a, b, c, d$. Varying over the indices then gives the desired result.

     Going back to the full tensor product, for~$x_1\otimes x_2\in F_1\otimes F_2$ we know by assumption that
    ~$$ (g_1\otimes g_2)(x_1\otimes x_2) = (\varepsilon_1x_1\otimes\varepsilon_2 x_2) + (\varepsilon_1x_1\otimes \tau_2(x_2)) + (\tau_1(x_1)\otimes\varepsilon_2x_2) + (\tau_1(x_1)\otimes\tau_2(x_2)) = \varepsilon x_1\otimes x_2.$$ Since all the terms of this sum live in different direct summands of~$G_1\otimes G_2$, it follows that~$\tau_1 = \tau_2 = 0$.

     Finally, for~$x_1\otimes y_2\in F_1\otimes T_2$ we have
    ~$$(g_1\otimes g_2)(x_1\otimes y_2) = \varepsilon_1 x_1 \otimes \sigma_2(y_2) = \varepsilon x_1\otimes y_2,$$
     which implies that~$\sigma_2 = \varepsilon_2 \id$. Similarly, by looking at the action on~$T_1\otimes F_2$ we deduce that~$\sigma_1 = \varepsilon_1 \id$. Altogether, we thus obtain that~$g_i = \varepsilon_i \id$ and with~$\varepsilon_i \in \{\pm 1\}$ and~$\varepsilon_1 \varepsilon_2 = \varepsilon$, as required.
 \end{proof}

 \begin{lemma}\label{lem:kunneth}
Let~$(C_1,d_1)$,~$(C_2,d_2)$ be chain complexes of finitely generated free~$\Z$-modules,~$f_i, g_i:C_i\rightarrow C_i$ chain maps for~$i\in\{1,2\}$ such that 
\begin{enumerate}[(i)]
    \item~$(\id \otimes \; \varepsilon_2 g_2)$ and~$(\varepsilon_1 g_1 \otimes \id)$ induce isomorphisms on the homology~$H_\ast(C_1 \otimes C_2)$; \label{enumitem:kunneth hyp 1}
    \item~$(f_i)_* = \varepsilon_i (g_i)_* \colon H_*(C_i)\rightarrow H_*(C_i)$, with~$\varepsilon_i \in \{\pm 1\}$; \label{enumitem:kunneth hyp 2}
    \item~$(f_1\otimes f_2)_* = \varepsilon_1\varepsilon_2 (g_1\otimes g_2)_* \colon H_*(C_1\otimes C_2) \rightarrow H_*(C_1\otimes C_2)$. \label{enumitem:kunneth hyp 3}
\end{enumerate}

Then~$(f_1\otimes \id)_* = \varepsilon_1 (g_1\otimes \id)_*$ and~$(\id\otimes f_2)_* = \varepsilon_2 (\id\otimes g_2)_*$ on~$H_*(C_1\otimes C_2)$.
     
 \end{lemma}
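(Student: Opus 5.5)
The plan is to compare the two sides through the Künneth short exact sequence, which is natural in each tensor factor separately. Write $H=H_*(C_1\otimes C_2)$. For free complexes over $\Z$ we have the natural sequence
\[0\to \textstyle\bigoplus_{p+q=n} H_p(C_1)\otimes H_q(C_2)\to H_n(C_1\otimes C_2)\to \bigoplus_{p+q=n-1}\operatorname{Tor}(H_p(C_1),H_q(C_2))\to 0 .\]
Call the subgroup on the left $T$. Set
\[A=(f_1\otimes\id)_*,\quad A'=\varepsilon_1(g_1\otimes\id)_*,\quad B=(\id\otimes f_2)_*,\quad B'=\varepsilon_2(\id\otimes g_2)_*,\]
and $N=A-A'$, $M=B-B'$.

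\textbf{Step 1: $N$ and $M$ are zero on $T$ and on the Tor quotient.} By hypothesis \eqref{enumitem:kunneth hyp 2} and naturality, $A$ and $A'$ restrict to the same map $(f_1)_*\otimes\id=\varepsilon_1(g_1)_*\otimes\id$ on $T$. They also induce the same map $\operatorname{Tor}((f_1)_*,\id)$ on the Tor quotient. Hence $N$ vanishes on $T$ and has image inside $T$, and the same holds for $M$. In particular $NM=MN=0$.

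\textbf{Step 2: use hypotheses (i) and (iii).} At chain level $f_1\otimes\id$ commutes with $\id\otimes f_2$ and with $\id\otimes g_2$, so $A,A',N$ commute with $B,B',M$. Expanding hypothesis \eqref{enumitem:kunneth hyp 3}, $AB=A'B'$, with $A=A'+N$ and $B=B'+M$, and using $NM=0$ gives
\[NB'+A'M=0.\]
By hypothesis \eqref{enumitem:kunneth hyp 1}, $A'$ and $B'$ are invertible, so $N=-A'MB'^{-1}$ and symmetrically $M=-B'NA'^{-1}$. It remains to show $N=0$; $M=0$ then follows from the same relation.

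\textbf{Step 3 (the main obstacle): separate the two factors.} The relation in Step 2 does not force $N=0$ by itself, so I will reduce to elementary complexes. By Smith normal form, each $C_i$ is chain homotopy equivalent to a finite direct sum of elementary complexes, either $\Z$ in a single degree or $\Z\xrightarrow{k}\Z$. After replacing $C_1$ and $C_2$ by these models, the inclusions and projections of the summands $E$ of $C_1$ and $E'$ of $C_2$ are chain maps. Then $N$ commutes with every $(\id\otimes\pi_{E'})_*$, and $M$ commutes with every $(\pi_E\otimes\id)_*$. Since $N$, $A'$, $B'$ and $M$ all commute with one another, the relation from Step 2 can be restricted to a single block $H_*(E\otimes E')$.

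In such a block, $N$ comes only from the first factor: $N\neq 0$ forces $E$ to be a torsion complex $\Z\xrightarrow{k}\Z$, and $N$ shifts the $C_1$-homological degree. Dually, $M$ shifts only the $C_2$-degree. I would check this on $H_*(E\otimes E')$, which is at most two copies of $\Z_{\gcd}$, with the bigrading read off from the cellular bases.

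Comparing bidegrees in $NB'=-A'M$ then shows both sides vanish, because $A'$ and $B'$ preserve the bigrading while $N$ and $M$ shift different indices. Invertibility of $B'$ then gives $N=0$, i.e.\ $(f_1\otimes\id)_*=\varepsilon_1(g_1\otimes\id)_*$. The symmetric argument gives $(\id\otimes f_2)_*=\varepsilon_2(\id\otimes g_2)_*$.

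The delicate point is making the bigrading compatible with the chain homotopy equivalences and with the maps $f_i,g_i$. I would first try transporting $f_i$ and $g_i$ along the equivalences, which changes each map on homology only by conjugation. If that fails, I would instead work with the filtration of $C_1\otimes C_2$ by the degree in $C_1$.
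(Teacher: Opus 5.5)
Your Steps 1 and 2 follow the paper's proof. Your $N$ and $M$ are its $(F_1\otimes\id)_*$ and $(\id\otimes F_2)_*$, and $NB'+A'M=0$ is its identity $(F_1\otimes\varepsilon_2g_2)_*+(\varepsilon_1g_1\otimes F_2)_*=0$. Your naturality argument on the Tor quotient for ``image in $T$'' is, if anything, cleaner than the paper's.

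The gap is in Step 3, which carries all the content. The claim that the relation ``can be restricted to a single block $H_*(E\otimes E')$'' is false. That $N,M,A',B'$ commute with one another does not make them block-diagonal. $N$ and $A'$ commute with $(\id\otimes\pi_{E'})_*$ but not with $(\pi_E\otimes\id)_*$, because $f_1,g_1$ are arbitrary chain maps that mix the summands of $C_1$; dually for $M$ and $B'$. So $NB'$ and $A'M$ spread $H_*(E\otimes E')$ over all blocks, and ``$N$ in such a block'' is not defined. As a result, the decisive fact is only asserted: $N$ sends a lift of a class in $\mathrm{Tor}(H_p(C_1),H_q(C_2))$ into $H_{p+1}(C_1)\otimes H_q(C_2)$, while $M$ sends it into $H_p(C_1)\otimes H_{q+1}(C_2)$.

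There is a second problem. $H_*(C_1\otimes C_2)$ carries no bigrading off $T$: the Tor generator of $E\otimes E'$ is a mixed combination of $e_{p+1}\otimes e'_q$ and $e_p\otimes e'_{q+1}$. So ``$A',B'$ preserve the bigrading'' only makes sense on $T$ and on the Tor quotient. The comparison must therefore be made on a lift $x$ of a Tor class of a single type $(p,q)$. On a general element, the type-$(p,q)$ contribution to $NB'x$ and the type-$(p+1,q-1)$ contribution to $A'Mx$ both land in $H_{p+1}(C_1)\otimes H_q(C_2)$ and could cancel.

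The paper supplies the missing fact with MacLane's explicit lift $z=\alpha_1\otimes\beta_2-(-1)^p\beta_1\otimes\alpha_2$, where $n\alpha_i=d_i\beta_i$. Writing $F_1\alpha_1=d_1\gamma$, one gets $(F_1\otimes h)_*[z]=(-1)^p[w\otimes h(\alpha_2)]$ with $w=n\gamma-F_1\beta_1$ a $(p+1)$-cycle, for \emph{any} chain map $h$. Taking $h=\varepsilon_2g_2$ handles the composite $NB'$ directly, with no need to know how $B'$ interacts with a splitting.

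Your decomposition idea can also be repaired if you split one factor at a time:
\begin{itemize}
\item Smith normal form gives a chain \emph{isomorphism} $C_2\cong\bigoplus E'$, so the ``delicate point'' about homotopy equivalences is a non-issue.
\item $N$ preserves each $H_*(C_1\otimes E')$. When $H_*(E')$ is $\Z/l$ in degree $q$, the facts that $N$ lands in $T$ and preserves total degree force $N$ to map the Tor part into $H_{p+1}(C_1)\otimes H_q(E')$. Symmetrically, splitting $C_1$ gives the shift property for $M$.
\item $B'$ preserves the type on the Tor quotient, $N$ kills $T$, and $A'$ preserves the bigrading of $T$. So for a pure lift $x$ of type $(p,q)$, $NB'x$ lies in $H_{p+1}(C_1)\otimes H_q(C_2)$ and $A'Mx$ lies in $H_p(C_1)\otimes H_{q+1}(C_2)$. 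The relation $NB'x=-A'Mx$ then forces both to vanish.
\item Since $NB'$ also kills $T$, $NB'=0$, and invertibility of $B'$ gives $N=0$.
\end{itemize}
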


 \begin{proof}
     Consider the chain maps~$F_i = f_i -\varepsilon_i g_i \colon C_i\rightarrow C_i$. By hypothesis (\ref{enumitem:kunneth hyp 2}),~$(F_i)_* = 0$ on~$H_*(C_i)$. We want to show that~$(F_1\otimes \id)_* = (\id\otimes F_2)_* = 0$ on~$H_*(C_1\otimes C_2)$. Consider the K\"unneth exact sequence
     \begin{equation*}
         1 \to H_*(C_1)\otimes H_*(C_2) \to H_*(C_1\otimes C_2) \to \mathrm{Tor} (H_*(C_1),H_*(C_2)) \to 1.
     \end{equation*}

     \noindent In particular,~$H_*(C_1)\otimes H_*(C_2)$ is a natural subgroup of~$H_*(C_1\otimes C_2)$.
     
     First of all, for any chain map~$h:C_2\rightarrow C_2$, let us show that~$(F_1\otimes h)_*$ acts trivially on~$H_*(C_1)\otimes H_*(C_2)$ and maps~$H_*(C_1\otimes C_2)$ into this subgroup. To prove the first claim, just notice that by naturality of the K\"unneth exact sequence,
    ~$$(F_1\otimes h)_* |_ {H_*(C_1)\otimes H_*(C_2)} = (F_1)_* \otimes h_* = 0.$$
     For the second claim, we use the fact that K\"unneth's sequence splits (albeit non-canonically). More concretely, by \cite[Proposition V.10.6]{MacLane} a lift of~$\mathrm{Tor} (H_*(C_1),H_*(C_2))$ in~$H_*(C_1\otimes C_2)$ can be constructed as follows: given classes~$[\alpha_1] \in H_p(C_1)$ and~$[\alpha_2] \in H_q(C_2)$ of finite order, an integer~$n$ such that~$n[\alpha_i] = 0$, and chains~$\beta_i$ such that~$n\alpha_i = d_i \beta_i$, the chain~$z = \alpha_1\otimes \beta_2 - (-1)^p \beta_1\otimes\alpha_2$ is a cycle in~$C_1\otimes C_2$ lifting a class in~$\mathrm{Tor}(H_p(C_1),H_q(C_2))$, and every class in~$\mathrm{Tor}$ admits a lift of this form.

     Since~$(F_1)_* = 0$, we deduce that~$F_1(\alpha_1) = d_1\gamma$ for some chain~$\gamma\in C_1$, and
    ~$$(F_1\otimes h)(z) = d_1\gamma\otimes h(\beta_2) - (-1)^p F_1(\beta_1)\otimes h(\alpha_2) = d(\gamma\otimes h(\beta_2)) + (-1)^p (n\gamma - F_1(\beta_1))\otimes h(\alpha_2),$$
    so that
    ~$$(F_1\otimes h)_* [z] = (-1)^p [w\otimes h(\alpha_2)],$$
     with~$w = n\gamma -F_1(\beta_1) \in C_1$ a cycle of degree~$p+1$.
     Therefore,~$(F_1\otimes h)_* [z] \in H_{p+1}(C_1)\otimes H_q(C_2)$. This shows that~$(F_1\otimes h)_* (H_*(C_1\otimes C_2)) \subseteq H_*(C_1)\otimes H_*(C_2)~$, as required.

     The same argument also shows that, for any chain map~$l:C_1\rightarrow C_1$, the induced~$(l\otimes F_2)_*$ acts trivially on~$H_*(C_1)\otimes H_*(C_2)$ and maps~$H_*(C_1\otimes C_2)$ into this subgroup. However, crucially, while a map of the form~$F_1\otimes h$ maps (a lift of)~$\mathrm{Tor}(H_p(C_1),H_q(C_2))$ into~$H_{p+1}(C_1)\otimes H_q(C_2)$, a map of the form~$l\otimes F_2$ maps~$\mathrm{Tor}(H_p(C_1),H_q(C_2))$ into~$H_p(C_1)\otimes H_{q+1}(C_2)$.

     Finally, note that 
    ~$$F_1\otimes F_2 + F_1\otimes \varepsilon_2 g_2 + \varepsilon_1 g_1\otimes F_2 = f_1\otimes f_2 - \varepsilon_1 \varepsilon_2 g_1\otimes g_2,$$
     acts trivially on homology by hypothesis (\ref{enumitem:kunneth hyp 3}), so that
    ~$$(F_1\otimes F_2)_* + (F_1\otimes \varepsilon_2 g_2)_* + (\varepsilon_1 g_1\otimes F_2)_* = 0 .~$$
     Now,~$(F_1\otimes F_2)_* = (F_1\otimes \id)_* \circ (\id\otimes F_2)_* = 0$ since~$(\id\otimes F_2)_*$ has image in~$H_*(C_1)\otimes H_*(C_2)$ and~$(F_1\otimes \id)_*$ vanishes on this subgroup. We thus deduce that
    ~$$(F_1\otimes \varepsilon_2 g_2)_* + (\varepsilon_1 g_1\otimes F_2)_* = 0.$$ As remarked above, both~$(F_1\otimes \varepsilon_2 g_2)_*$ and~$(\varepsilon_1 g_1\otimes F_2)_*$ vanish on~$H_*(C_1)\otimes H_*(C_2)$, and they map lifts of~$\mathrm{Tor}(H_p(C_1),H_q(C_2))$ into summands of different bi-degree. It follows that 
    ~$$(F_1\otimes \varepsilon_2 g_2)_* = (\varepsilon_1 g_1\otimes F_2)_* = 0.$$
     By hypothesis (\ref{enumitem:kunneth hyp 1}),~$(\id\otimes \varepsilon_2 g_2)_*$ and~$(\varepsilon_1 g_1\otimes \id)_*$ are automorphisms, so we conclude that
    ~$$(F_1\otimes \id)_* = (\id\otimes F_2)_* = 0.$$    
 \end{proof}

 We are now ready to prove Theorem \ref{thm:monodromy_group_split_links}.
 
\begin{thm:monodromy_split}
   Let~$L=\sqcup_i^nL_i$ be a split link. The projective monodromy group of~$L$ is~$$ \PuMon(L)\cong \left(\prod _i^n \mu(\calM(L_i))\right)\rtimes \calP_L.$$
\end{thm:monodromy_split}
\begin{proof}
    Recall that by Theorem \ref{thm:motion_group_split_link},~$$
    \calM(L) = \left(\FR(L) \rtimes \prod_i^n \calM(L_i)\right) \rtimes \calP_L,$$ 
    where the Fouxe-Rabinovitch group $\FR(L)$ corresponds to conjugations, the middle factor is the direct product of the motion groups of the pieces, and the symmetric group factor $\calP_L$ permutes ambiently isotopic pieces. By Lemma \ref{lem:fr_kernel},~$\FR(L) \subseteq \ker(\mu)$, so ~$\mu$ factors as
    \[\begin{tikzcd}
	{\left(\FR(L) \rtimes \prod_i^n \calM(L_i)\right) \rtimes \calP_L} & \\
	& { \left(\prod_i^n \calM(L_i)\right) \rtimes \calP_L} \\
	{\PuMon(L)}
	\arrow[two heads, from=1-1, to=2-2]
	\arrow["\mu"', two heads, from=1-1, to=3-1]
	\arrow["{\mu'}"', two heads, from=2-2, to=3-1]
\end{tikzcd}\]

Let us first show that~$\mu'(\prod_i^n\calM(L_i))= \prod_i^n(\mu'(\calM (L_i)))$.  Since~$\mu'$ is a homomorphism, each of the subgroups~$\mu'(\calM (L_i))$ is normal in~$\mu'(\prod_i^n\calM(L_i))$, and every element of~$\mu'(\prod_i^n\calM(L_i))$ can be decomposed as a product of elements of these subgroups. We just need to prove that such a decomposition is unique. Consider motions~$\gamma_i, \gamma_i ' \in \calM(L_i)$ for all~$i=1,\dots, n$ such that ~$\mu'(\gamma_1,\cdots,\gamma_n) = \mu'(\gamma_1',\cdots,\gamma_n')$. 

For each~$i$, we take an unoriented circular movie representing~$\gamma_i$, and by abuse of notation we denote by~$\mu'(\gamma_i)$ and~$\mu'(\gamma_1, \dots, \gamma_n)$ the representatives of the corresponding projective classes induced by these choices of movies. 

The Khovanov chain complex of a split link is the tensor product of the chain complexes of the split pieces, and at the chain level the map induced by the tuple~$(\gamma_1,\cdots,\gamma_n)$ is the tensor product of the maps induced by each~$\gamma_i$ (and similarly for the~$\gamma_i'$). By the K\"unneth formula,~$\Kh(L)$ contains a natural subgroup~$\Kh(L_1)\otimes\cdots\otimes \Kh(L_n)$ on which~$\mu'(\gamma_1,\cdots,\gamma_n)$ acts as~$\mu'(\gamma_1)\otimes\cdots\otimes\mu'(\gamma_n)$. We thus deduce that the two maps~$\mu'(\gamma_1)\otimes\cdots\otimes\mu'(\gamma_n)$ and~$\mu'(\gamma_1')\otimes\cdots\otimes\mu'(\gamma_n')$ on~$\Kh(L_1)\otimes\cdots\otimes \Kh(L_n)$ agree up to a sign. 

By repeated applications of Lemma \ref{lem:reduction_to_tensors}, since Khovanov homology of a link always contains a non-trivial free summand, it follows that~$\mu'(\gamma_i) = \pm\mu'(\gamma_i')$ on~$\Kh(L_i)$. Now, by repeated applications of Lemma \ref{lem:kunneth} we deduce that~$(1, \cdots, 1, \gamma_i, 1, \cdots, 1)$ and~$(1, \cdots, 1, \gamma_i', 1, \cdots, 1)$ induce the same map up to sign on~$\Kh(L)$, so that 
$$\mu'(1, \cdots, 1, \gamma_i, 1, \cdots, 1) = \pm \mu'(1, \cdots, 1, \gamma_i', 1, \cdots, 1)$$ 
for all~$i$. Thus, every element of~$\mu'(\prod_i^n\calM(L_i))$ can be decomposed uniquely as a product of elements of the normal subgroups~$\mu'\calM (L_i))$, which implies~$$\mu'\left(\prod_i^n\calM(L_i)\right)= \prod_i^n\mu'(\calM (L_i)),$$ 
as claimed.

Since~$\mu'$ is a surjective homomorphism, we have that 
\[\PuMon(L) = \mu'\left(\prod_i^n\calM(L_i)\right)\cdot \mu'(\calP_L),\] with~$\mu'(\prod_i^n\calM(L_i))$ a normal subgroup. Moreover, elements in~$\mu'(\calP_L)$ act on~$\Kh(L_1)\otimes\cdots\otimes \Kh(L_n)$ by permuting the tensor components, while elements in~$\mu'(\prod_i^n\calM(L_i))$ preserve the tensor components. Therefore,~$\mu'(\calP_L) \cap \mu'(\prod_i^n\calM(L_i))$ is trivial, which implies that~$$\PuMon(L) = \mu'\left(\prod_i^n\calM(L_i)\right)\rtimes \mu'(\calP_L).$$

Finally, by Lemma \ref{lem:intersection_symmetric},~$\mu'$ restricts to an isomorphism on the subgroup~$\calP_L$, so that~$\mu'(\calP_L)\cong \calP_L$, while as shown above we have~$\mu'(\prod_i^n\calM(L_i))= \prod_i^n\mu'(\calM (L_i))$. Thus, we conclude that 
$$\PuMon(L) \cong \left(\prod_i^n\mu'(\calM (L_i))\right) \rtimes \calP_L.$$
\end{proof}

\begin{corollary}\label{cor:refined formula when there is no torsion}
    If the Khovanov homology~$\Kh(L_i)$ is torsion-free for each piece~$L_i$ of a split link~$L$, then 
   ~$$ \PuMon(L)\cong \left(\prod _i \PuMon(L_i)\right)\rtimes \calP_L.$$
\end{corollary}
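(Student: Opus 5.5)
By Theorem~\ref{thm:monodromy_group_split_links} we already have $\PuMon(L)\cong\left(\prod_i\mu(\calM(L_i))\right)\rtimes\calP_L$. Here $\mu(\calM(L_i))$ denotes the image of the subgroup $\calM(L_i)\le\calM(L)$ inside $\PuMon(L)$, i.e.\ a group of automorphisms of $\Kh(L)$ modulo sign. So it suffices to produce, for each $i$, an isomorphism $\mu(\calM(L_i))\cong\PuMon(L_i)$.

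First, when all $\Kh(L_j)$ are torsion-free, the $\mathrm{Tor}$ term in the K\"unneth sequence vanishes. The natural inclusion is then an isomorphism
\[\Kh(L_1)\otimes\cdots\otimes\Kh(L_n)\xrightarrow{\ \cong\ }\Kh(L).\]
Next, a motion $\gamma_i\in\calM(L_i)$ is supported in the ball $B_i$. Choosing an unoriented circular movie for it that only changes the diagram of $L_i$, the induced chain map on $C(L)=\bigotimes_j C(L_j)$ is $\id\otimes\cdots\otimes\phi_i\otimes\cdots\otimes\id$. Here $\phi_i$ is the chain map induced on $C(L_i)$ by the same movie regarded as an unoriented circular movie of $L_i$; by Theorem~\ref{thm:motions_to_monodromies} applied to $L_i$, its class in $\PuMon(L_i)$ is $\mu_{L_i}(\gamma_i)$. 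On homology, $\mu(\gamma_i)$ therefore acts as $\id\otimes\cdots\otimes(\phi_i)_*\otimes\cdots\otimes\id$ under the K\"unneth isomorphism.

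This gives a map $\Psi_i\colon\PuMon(L_i)\to\mu(\calM(L_i))$, $\pm g\mapsto\pm(\id\otimes\cdots\otimes g\otimes\cdots\otimes\id)$.
\begin{itemize}
\item It is well defined and a homomorphism, since tensoring with identities is multiplicative and respects signs.
\item It is surjective by the previous paragraph together with surjectivity of $\mu_{L_i}\colon\calM(L_i)\to\PuMon(L_i)$ (Theorem~\ref{thm:motions_to_monodromies}). We use here that the subgroup $\calM(L_i)$ of $\calM(L)$ in Theorem~\ref{thm:motion_group_split_link} is exactly the motions of $L_i$ supported in $B_i$, so $\mu(\gamma_i)$ corresponds to $\mu_{L_i}(\gamma_i)$.
\item It is injective: if $\id\otimes g\otimes\id=\varepsilon\id$, then Lemma~\ref{lem:reduction_to_tensors}, applied repeatedly (each $\Kh(L_j)$ has positive rank), gives $g=\pm\id$.
\end{itemize}
Substituting $\mu(\calM(L_i))\cong\PuMon(L_i)$ into Theorem~\ref{thm:monodromy_group_split_links} yields the formula.

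The main obstacle is justifying the chain-level identification. One must check that a movie for a motion of $L_i$ inside $B_i$ can be chosen with every Reidemeister move confined to the $L_i$ part of the diagram, so that the induced map on $C(L)$ is genuinely the tensor product with identities. This follows by isotoping the trace into $B_i\times[0,1]$, disjoint from the other pieces, and using the locality of Khovanov's Reidemeister maps. Torsion-freeness is what removes the need for Lemma~\ref{lem:kunneth}: it makes the K\"unneth map an isomorphism, so the action on $\Kh(L)$ is determined entirely by the tensor factor.
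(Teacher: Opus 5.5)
Your proposal is correct and follows the paper's argument. Torsion-freeness makes the K\"unneth inclusion an isomorphism, so each $\mu(\calM(L_i))$ is identified with $\PuMon(L_i)$ via $g\mapsto \id\otimes g\otimes\id$, and the formula then follows from Theorem~\ref{thm:monodromy_group_split_links}; your write-up simply spells out the well-definedness, the surjectivity, and the injectivity (via Lemma~\ref{lem:reduction_to_tensors}) that the paper's one-line proof leaves implicit.
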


\begin{proof}
    In this case,~$\Kh(L)$ is the tensor product of the~$\Kh(L_i)$, whence~$\mu'(\calM(L_i)) = \PuMon(L_i)$, and the result is now immediate from Theorem \ref{thm:monodromy_group_split_links}.
\end{proof}

It is conjectured \cite{ShumakovitchTorsion} that the only prime, non-split links with torsion-free Khovanov homology are the unknot and the Hopf link. Hence, Theorem~\ref{thm:monodromy_group_hopf-unknots} is the only expected application of Corollary~\ref{cor:refined formula when there is no torsion}. 

\subsection{Applications}
We now apply Theorem \ref{thm:monodromy_group_split_links} to an infinite class of split links.
We begin with two lemmas that deal with the monodromy group of the single link pieces: the unknot~$U$ and the Hopf link~$H$.

\begin{lemma}\label{lem:unknot}
    The group~$\PuMon(U)$ is isomorphic to~$\mathbb{Z}_2$.
\end{lemma}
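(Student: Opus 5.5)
Since $\PuMon(U)$ is a quotient of $\calM(U)$ by Theorem~\ref{thm:motions_to_monodromies}, and $\calM(U)\cong\Z_2$ (Dahm), the group $\PuMon(U)$ is either trivial or $\Z_2$. So it suffices to show that the generator of $\calM(U)$ is not in $\ker(\mu)$, i.e.\ that it induces an automorphism of $\Kh(U)$ different from $\pm\id$.

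\textbf{Step 1: the generator.} The nontrivial motion of the unknot is the half-turn flip: rotate the round circle by $\pi$ about a horizontal axis lying in its plane, which reverses the orientation of the circle. In diagrams with respect to the projection $p$, I would take the standard crossingless circle $O$ and realise the flip as an unoriented circular movie from $O$ to $O$ with reversed orientation. One concrete choice is: a Reidemeister I move creating a kink, then planar isotopies and a Reidemeister II/III sequence that carries the kink around, then a final Reidemeister I move removing it, so that the net effect is the rotation. Alternatively, one can cite Jacobsson's computation that this movie induces a nontrivial map.

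\textbf{Step 2: the induced map.} Recall $\Kh(U)=\Z\langle v_+\rangle\oplus\Z\langle v_-\rangle$ in homological degree $0$ and quantum degrees $\pm1$. The induced map preserves both degrees, so it has the form $v_+\mapsto a v_+$, $v_-\mapsto b v_-$ with $a,b\in\{\pm1\}$. The map is $\pm\id$ exactly when $a=b$, so I must show $a=-b$.

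I would compute the composite of the Reidemeister chain maps along the movie: the R1 maps in Khovanov's/Jacobsson's conventions send $v_\pm$ through the kinked diagram with explicit signs. Tracking $v_+$ and $v_-$ separately should give opposite overall signs. A cleaner alternative is to use functoriality against closed surfaces. Cap off the flip cylinder with a cup at the bottom and a cap at the top to obtain a closed surface, and use a dotted cap on one side. Comparing with the identity cylinder, the difference in sign between $v_+$ and $v_-$ should appear as the known sign anomaly of the unoriented/flipped torus versus sphere evaluations. This is essentially Jacobsson's example, so I would cite it if possible.

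\textbf{Main obstacle.} The hardest part is the sign bookkeeping in Step 2: making sure the movie really represents the nontrivial motion, and that the composite of R1 maps gives $\mathrm{diag}(1,-1)$ rather than $\pm\id$. I would first attempt the direct chain-level computation with the simplest movie (two R1 moves together with a planar rotation). As a cross-check, I would compare against the known result \cite{Collari} that the monodromy of the unknot is $\Z_2$. Once $a=-b$ is established, $\mu$ is injective on $\calM(U)\cong\Z_2$, and so $\PuMon(U)\cong\Z_2$.
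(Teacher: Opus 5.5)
Your reduction is the same as the paper's. Since $\calM(U)\cong\Z_2$ is generated by the flip, $\PuMon(U)$ is either trivial or $\Z_2$. By bigrading, the induced map is $\mathrm{diag}(a,b)$ with $a,b\in\{\pm1\}$, and everything hinges on showing $a=-b$. But that sign is the whole content of the lemma, and you leave it at ``should give opposite overall signs''. There are three further problems:
\begin{itemize}
\item The movie you describe in Step~1 would not produce the sign. If you create a small kink, move it around $O$, and then delete that same kink, the surviving strand keeps its orientation. The movie therefore ends at $O$ with its original orientation, so it represents the trivial element of $\calM(U)$ and induces $\pm\id$. (The effect of $\gamma_1$ on the orientation of $U$ is a homomorphism $\calM(U)\to\Z_2$, which is an isomorphism here because the flip reverses orientation.)
\item The closed-surface alternative fails as stated. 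Capping the flip cylinder gives a sphere. A dotted or undotted sphere evaluation only sees one of $a,b$, and only up to the global sign ambiguity, so it cannot distinguish $a=b$ from $a=-b$.
\item The suggested citations do not replace the computation.
\end{itemize}

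What is missing is the paper's movie: $O\to$ figure-eight $\to\overline{O}$ by two R1 moves, where the second R1 deletes the \emph{other} lobe. By the orientation remark above, any unoriented circular movie from $O$ to $\overline{O}$ represents the flip, so your worry about whether the movie is the nontrivial motion disappears.

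The sign then takes a few lines. Let the figure-eight have lobes $L,R$, and suppose its crossing is positive. Then its Khovanov homology is $\ker(m\colon V_L\otimes V_R\to V)$, spanned by $1_L\otimes X_R-X_L\otimes 1_R$ and $X_L\otimes X_R$. The R1 map creating $R$ as a kink on $L$ is local, so it commutes with the $X$-action at a basepoint on $L$. Hence it sends $1\mapsto \varepsilon(1_L\otimes X_R-X_L\otimes 1_R)$ and $X\mapsto \varepsilon\, X_L\otimes X_R$ with a single sign $\varepsilon$. The R1 map creating $L$ as a kink on $R$ is the same map with $L$ and $R$ swapped. The first element is antisymmetric under this swap and the second is symmetric. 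So the first map followed by the inverse of the second is $\pm\mathrm{diag}(-1,1)$, independently of sign conventions; this is the paper's $\mathrm{diag}(1,-1)$ in $\PuMon(U)$. The negative-crossing case is analogous, using $\mathrm{coker}\,\Delta$, where $[1_L\otimes X_R]=-[X_L\otimes 1_R]$. The paper reads off the same matrix from the Hayden--Sundberg tables.
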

\begin{proof}
    The motion group of the unknot~$U$ is generated by the flip map, which turns the unknot over by~$\pi$ \cite{Dahm62, Goldsmith}. We explicitly show that the corresponding map in~$\PuMon(U)$ is an order-$2$ element. Consider the movie in Figure~\ref{fig:Uflip_movie} representing the flip motion, where~$\text{R1}$ means Reidemeister move I.
    \begin{figure}[htb]
        \centering
        \begin{tikzpicture}
        \node[anchor=south west,inner sep=0] at (0,0){\includegraphics[width=6cm]{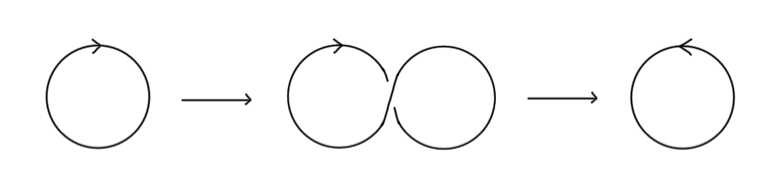}};
        \node at (1.7,1.1) {R1};
        \node at (4.4,1.1) {R1};
        \end{tikzpicture}
        \caption{A movie for the flip motion.}
        \label{fig:Uflip_movie}
    \end{figure}
    
    The corresponding sequence of cube of resolutions is drawn in Figure~\ref{fig:Uflip_res}. Using the tables from \cite{Hayden_Sundberg}, the corresponding map on homology is given by the order-$2$ matrix~$\begin{pmatrix}
           1 & 0 \\
           0 & -1 
       \end{pmatrix}$.
    \begin{figure}[htb]
        \centering
        \begin{tikzpicture}
        \node[anchor=south west,inner sep=0] at (0,0){\includegraphics[width=4cm]{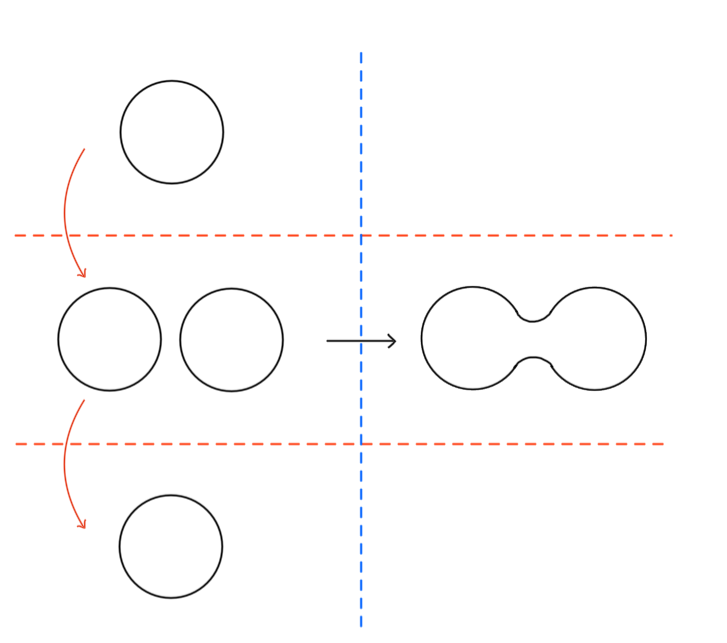}};
        \end{tikzpicture}
        \caption{Sequence of resolutions from the movie in Figure~\ref{fig:Uflip_movie}.}
        \label{fig:Uflip_res}
    \end{figure}
\end{proof}

\begin{lemma}\label{lem:hopf_link}
    Let~$H$ be a Hopf link in~$\mathbb{R}^3$. Then~$$\PuMon(H) \cong \mathbb{Z}_2 \times \mathbb{Z}_2.$$ 
\end{lemma}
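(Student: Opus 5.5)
We will use Theorem~\ref{thm:motions_to_monodromies}: $\PuMon(H)\cong \calM(H)/\ker(\mu)$, where $\calM(H)\cong Q_8$ by \cite{DamianiKamadaRingHTrivialLinks19, Boyd-Bregman-Hopf}. The group $Q_8$ has quotients $Q_8$, $\Z_2\times\Z_2$, $\Z_2$ and the trivial group. Any quotient of $Q_8$ in which the central element $-1$ dies is abelian, hence is a quotient of $Q_8/\{\pm1\}\cong\Z_2\times\Z_2$. So the plan has two parts: show that the central element of $Q_8$ lies in $\ker(\mu)$, and show that $\mu$ has image of order at least $4$. Together these force $\PuMon(H)\cong\Z_2\times\Z_2$.

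For the lower bound we use explicit generators of $\calM(H)$ from \cite{Boyd-Bregman-Hopf}. One generator $a$ is a rotation by $\pi$ of the whole Hopf link about an axis exchanging the two components. The other, $b$, is a rotation by $\pi$ about an axis preserving each component while reversing both orientations; it is a ``flip''-type motion analogous to the unknot flip of Lemma~\ref{lem:unknot}.

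The main computation is to draw movies for these motions from the standard two-crossing diagram of $H$, using a few Reidemeister I/II moves, and compute the induced maps using the tables of \cite{Hayden_Sundberg}, as in Lemma~\ref{lem:unknot}. We use $\Kh(H)\cong\Z^4$, supported in homological degrees $0$ and $2$ (for one choice of orientation), with each degree contributing $\Z^2$.

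From this computation we aim to show two things:
\begin{itemize}
\item $\mu(a)$ and $\mu(b)$ are each $\neq\pm\id$;
\item $\mu(a)\neq\pm\mu(b)$.
\end{itemize}
For example, one may act on the generators in degree $0$ by a diagonal matrix $\mathrm{diag}(1,-1)$, while another swaps or fixes them with a different sign pattern in degree $2$. This yields at least three distinct nontrivial elements of $\PuMon(H)$, so the image has order at least $4$.

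The main obstacle is the upper bound, i.e.\ showing $\mu(-1)=\pm\id$. The central element of $Q_8$ is represented by the full $2\pi$ rotation, namely $a^2$ (equivalently $b^2$). We try two routes.

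\begin{enumerate}
\item The first, and easiest, route is to compute directly that $\mu(a)^2=\pm\id$ from the matrices obtained above. Since $\mu$ is a homomorphism, this gives $\mu(a^2)=\pm\id$, so the image is abelian of exponent $2$.
\item As a backup, we argue that Khovanov homology of $H$ is supported in bigradings where each summand is rank one up to the degree shifts. Then every element of $\PuMon(H)$, being a grading-preserving automorphism of order dividing that of its motion, acts diagonally by $\pm1$ up to the permutation of generators coming from swapping components. From this we check the squares directly.
\end{enumerate}

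Finally, combining the order-at-least-$4$ image with the quotient $Q_8/\langle -1\rangle\cong\Z_2\times\Z_2$ gives the claimed isomorphism.
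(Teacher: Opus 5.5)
Your group-theoretic reduction is correct. Every non-trivial normal subgroup of $Q_8$ contains $-1$, so $-1\in\ker(\mu)$ together with $|\PuMon(H)|\geq 4$ forces $\PuMon(H)\cong Q_8/\{\pm1\}\cong\Z_2\times\Z_2$. Your inputs are also the paper's: Theorem~\ref{thm:motions_to_monodromies}, $\calM(H)\cong Q_8$ generated by the two $\pi$-rotations, and movie computations with the tables of \cite{Hayden_Sundberg}. However, you have the difficulty the wrong way round. The upper bound costs nothing. $\Kh(H)$ is free of rank $4$, with its four summands in four \emph{distinct} bigradings. Maps induced by Reidemeister moves and planar isotopies preserve bigrading, so in an adapted basis every element of $\uMon(H)$ is $\mathrm{diag}(\pm1,\pm1,\pm1,\pm1)$. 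In particular nothing can permute generators, since the two generators in each homological degree have different quantum degrees. So the ``swaps'' in your example and the ``permutation coming from swapping components'' in your route (2) cannot occur. Hence $\PuMon(H)$ is elementary abelian, $\mu$ factors through $Q_8^{\mathrm{ab}}=Q_8/\{\pm1\}$, and $-1\in\ker(\mu)$ automatically.

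The genuine gap is the lower bound. This is the entire content of the lemma, and you only announce it (``we aim to show''). Nothing formal prevents $\mu(a)$ or $\mu(b)$ from being $\pm\id$, or from coinciding up to sign. Until the movies are actually evaluated, you have not ruled out $\PuMon(H)\cong\Z_2$ or $\PuMon(H)=1$.

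This evaluation is precisely what the paper's proof consists of. Take the negative Hopf link, with basis ordered by bigrading $(-2,-6),(-2,-4),(0,-2),(0,0)$.
\begin{enumerate}
\item The component-exchanging rotation $a$ is a planar rotation of the diagram $D$ that swaps the two circles of the $00$-resolution. So, up to an overall sign, it fixes $v_-\otimes v_-$ and negates $v_+\otimes v_- - v_-\otimes v_+$ in homological degree $-2$, and hence is not $\pm\id$. In degree $0$ it acts by a scalar $\pm1$, since there it maps each circle of the $11$-resolution to itself.
\item The flip $b$ requires a genuine movie from $D$ to its orientation reversal $D'$. The paper evaluates its two halves as $\mathrm{diag}(1,1,-1,-1)$ and $\mathrm{diag}(1,-1,-1,1)$, so $\mu(b)=\mathrm{diag}(1,-1,1,-1)$, which is non-scalar in degree $0$.
\end{enumerate}
Consequently $\mu(a)$, $\mu(b)$ and $\mu(a)\mu(b)$ are all different from $\pm\id$, and the image has order $4$. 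Once this computation is supplied, with the bigrading remark in place of your ``main obstacle'', your argument is complete and essentially coincides with the paper's.
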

\begin{proof}
   We do the computations for the negative Hopf link; those for the positive Hopf link are analogous. The motion group~$\calM(H)$ is isomorphic to the quaternionic group~$Q_8$ \cite{DamianiKamadaRingHTrivialLinks19, Boyd-Bregman-Hopf}. In particular, if one takes the embedding in Figure \ref{figure:hopf_link} as the basepoint, the generators are given by the~$\pi$ rotation along the axis~$\alpha$ (the~$x$-axis), the~$\pi$ rotation along the axis~$\beta$ ($\{z = -y\} \cap \{x = 0\}$), and their composition.
    \begin{figure}[htb]
    \centering
    \begin{tikzpicture}
    \node[anchor=south west,inner sep=0] at (0,0){\includegraphics[width=7cm]{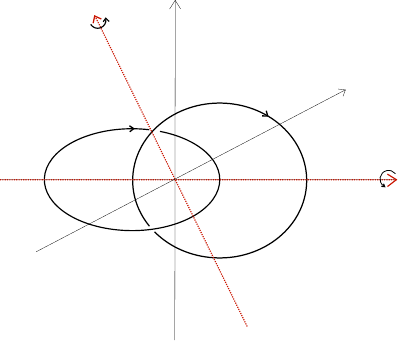}};
    \node at (7.2,3) {$\alpha$};
    \node at (2,6){$\beta$};
    \end{tikzpicture}
    \caption{A schematic of the Hopf link, with the two axis of rotation~$\alpha$ and~$\beta$.}
    \label{figure:hopf_link}
    \end{figure}

   By Theorem \ref{thm:motions_to_monodromies} we can compute~$\PuMon(H)$ by looking at the image of the generators of~$\calM(H)$ under the Khovanov motion picture map~$\mu$. Consider the oriented embedding represented in Figure \ref{figure:hopf_link} and let~$D$ be the diagram obtained by taking the projection on the plane orthogonal to~$\beta$. The diagram collection of~$D$ is the set~$\Dcal=\{D, D'\}$ with elements as in Figure~\ref{fig:Hopf_diagram_collection}.
   \begin{figure}[htb]
    \centering
    \begin{tikzpicture}[scale = 2]
    \node[anchor=south west,inner sep=0] at (0.5,0){\includegraphics[width = 2cm]{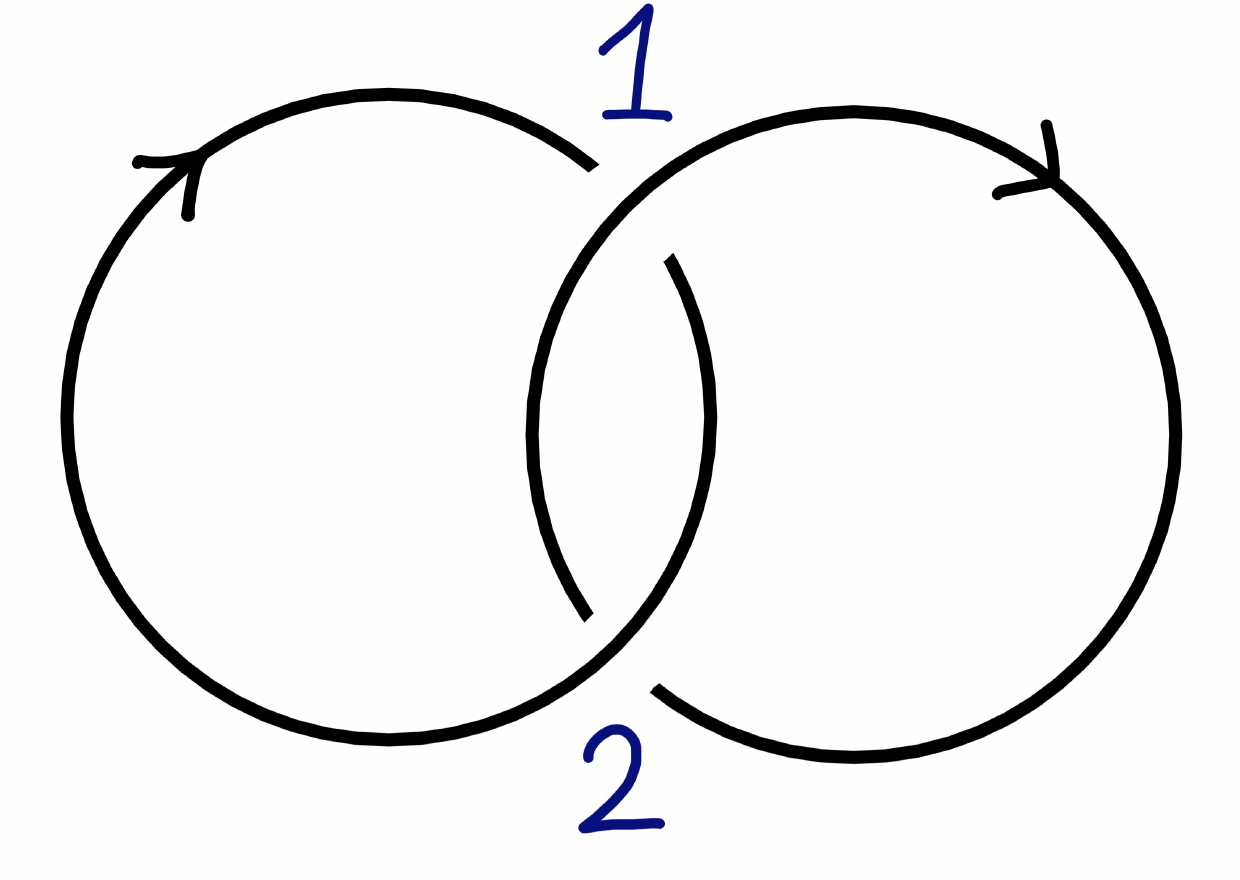}};
    \node at (0,0.7){$\Dcal= \{$};
    \node at (2.8,0.7) {,};
    \node[anchor=south west,inner sep=0] at (3,0){\includegraphics[width=2cm]{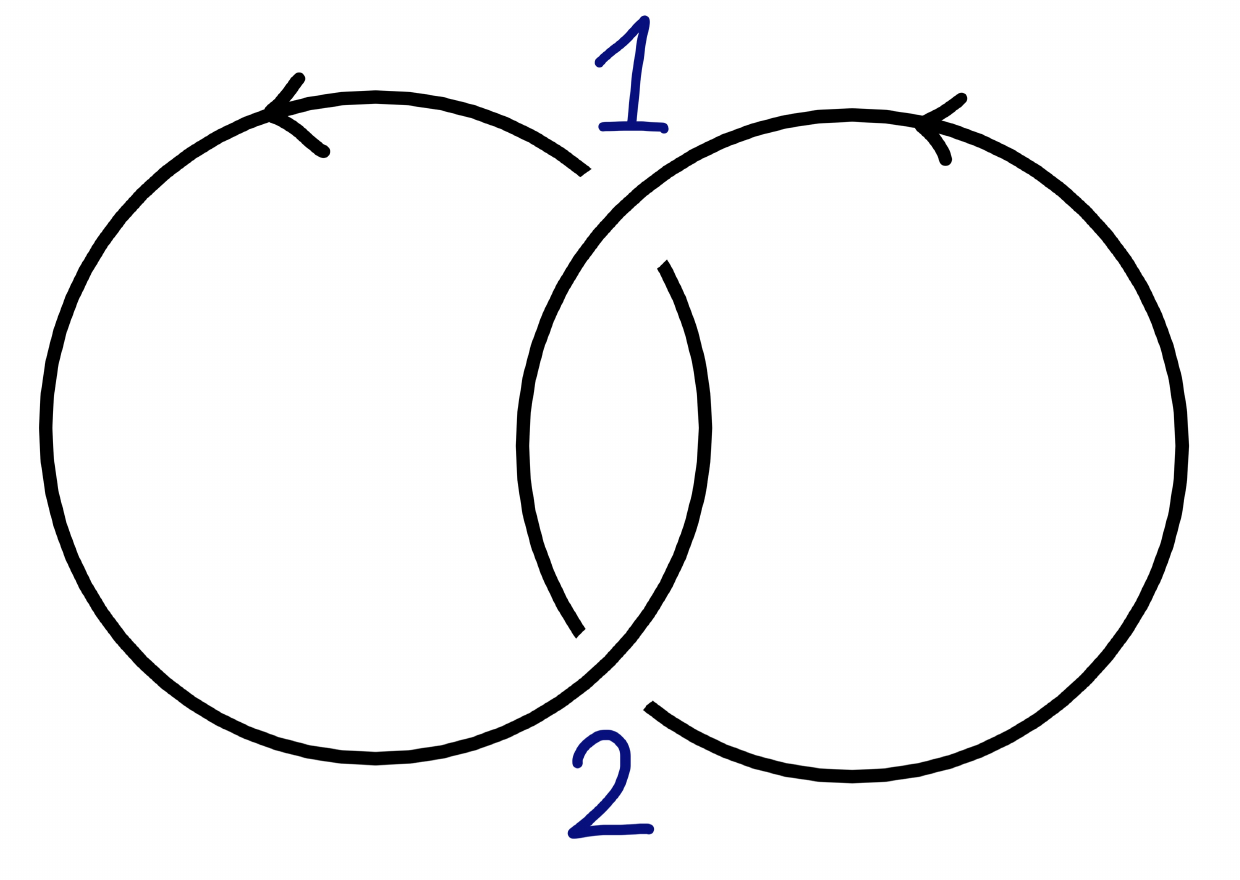}};
    \node at (5.2,0.7){$\}$};
    \end{tikzpicture}
    \caption{The diagram collection~$\Dcal$.}
    \label{fig:Hopf_diagram_collection}
    \end{figure}
   
   The Khovanov homology of the negative Hopf link is 
  ~$$\Kh(H)= \Z_{(-2,-6)}\oplus\Z_{(-2, -4)}\oplus\Z_{(0, -2)}\oplus\Z_{(0, 0)},$$ 
   where~$\Z_{(h,q)}$ is a copy of~$\Z$ in bigrading~$(h,q)$. Hence, the generating motions induce the following morphisms.
   \begin{itemize}
       \item The~$\pi$ rotation along the axis~$\beta$ corresponds to a planar isotopy on the plane of the projection exchanging the two components of the Hopf link, sending~$D$ to itself but with crossings swapped. Therefore the corresponding automorphism swaps the circles in the~$00$-resolution, and is represented by the matrix 
      ~$$\begin{pmatrix}
           1 & 0 & 0 & 0 \\
           0 & -1 & 0 & 0 \\
           0 & 0 & 1 & 0 \\
           0 & 0 & 0 & 1
       \end{pmatrix}.$$
       \item The~$\pi$ rotation along the axis~$\alpha$ corresponds to a flip of our diagram, presented by the movie in Figure~\ref{fig:flip_movie} with first still~$D$ and last still~$D'$.
       \begin{figure}[htb]
        \centering
        \begin{tikzpicture}
        \node[anchor=south west,inner sep=0] at (0.5,0){\includegraphics[width=0.62\linewidth]{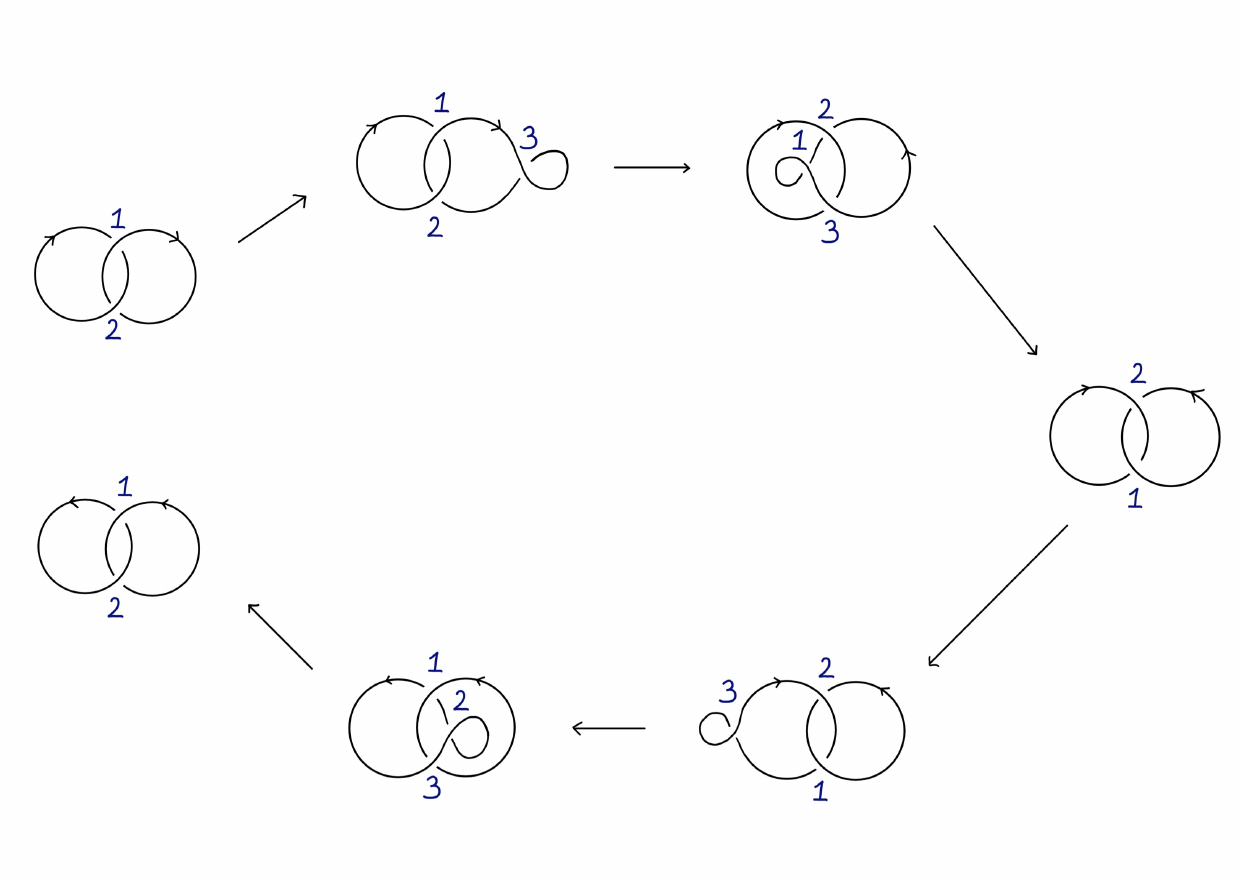}};
        \node at (2.4,5.5){R1};
        \node at (5.8,6){R3};
        \node at (8.75,5){R1};
        \node at (8.9,2.2){R1};
        \node at (5.55,1){R3};
        \node at (2.45,1.8){R1};
        \end{tikzpicture}
        \caption{The movie for the flip map.}
        \label{fig:flip_movie}
        \end{figure}
           
        Consider the map induced by the first half of the movie (that is, up to the rightmost diagram in Figure \ref{fig:flip_movie}). The corresponding sequence of cube of resolutions is drawn in Figure~\ref{fig:resolutions_hopf}, with each column corresponding to a fixed homological degree.  
        \begin{figure}[htb]
               \centering
               \includegraphics[width = 0.51\linewidth]{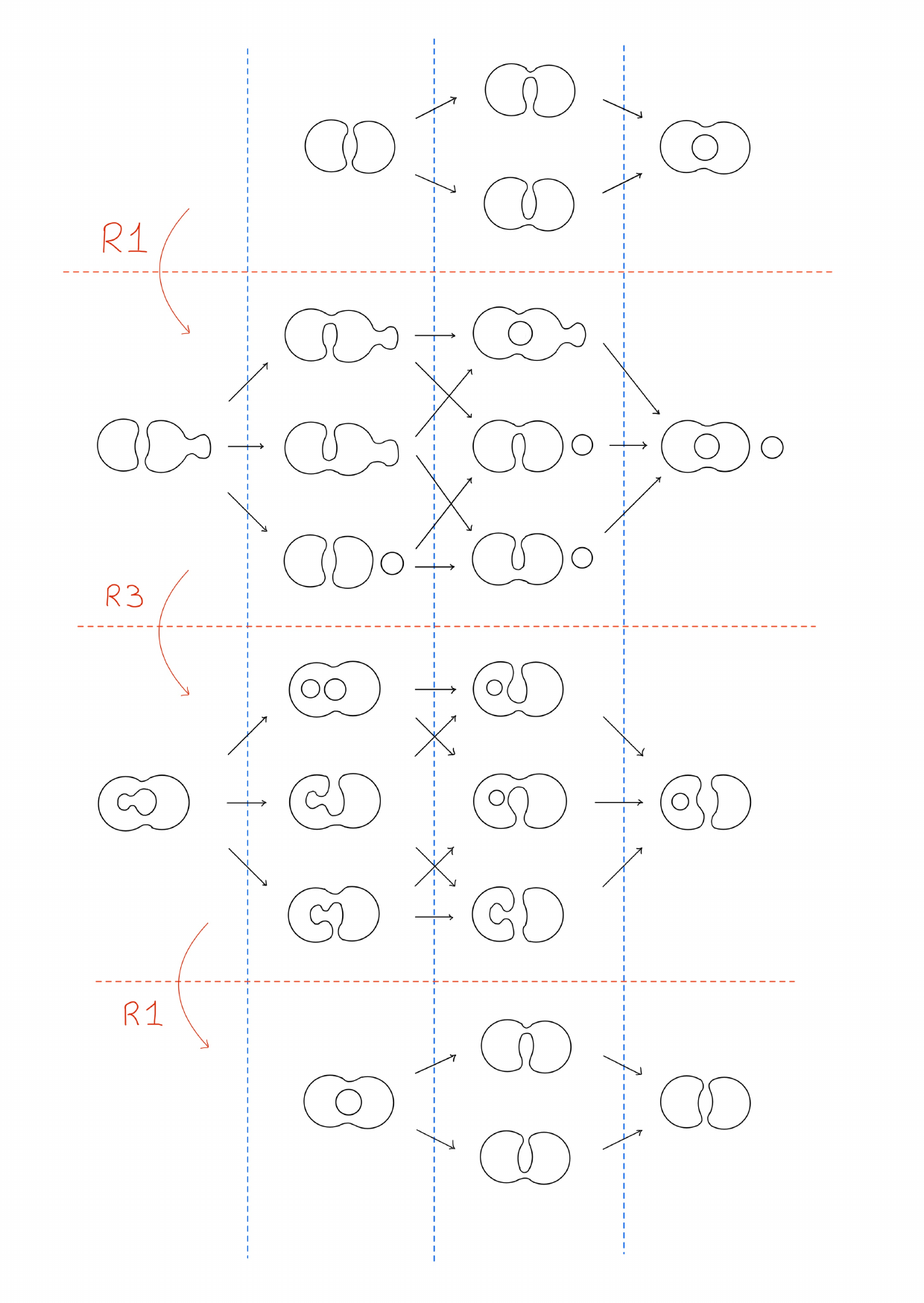}
               \caption{The cubes of resolutions associated to the diagrams in the first half of the movie.}
               \label{fig:resolutions_hopf}
        \end{figure}
        Using the tables from \cite{Hayden_Sundberg}, paying careful attention to the ordering of the crossings, we obtain the matrix 
       ~$$\begin{pmatrix}
            1 & 0 & 0 & 0 \\
            0 & 1 & 0 & 0 \\
            0 & 0 & -1 & 0 \\
            0 & 0 & 0 & -1
        \end{pmatrix}.$$
        Similarly, the map associated to the second part of the movie has matrix
       ~$$\begin{pmatrix}
            1 & 0 & 0 & 0 \\
            0 & -1 & 0 & 0 \\
            0 & 0 & -1 & 0 \\
            0 & 0 & 0 & 1
        \end{pmatrix}.$$
        Hence, the~$\pi$ rotation along the axis~$\alpha$ induces in homology the map represented by the matrix
       ~$$\begin{pmatrix}
             1 & 0 & 0 & 0 \\
             0 & -1 & 0 & 0 \\
             0 & 0 & -1 & 0 \\
             0 & 0 & 0 & 1
        \end{pmatrix}
        \begin{pmatrix}
               1 & 0 & 0 & 0 \\
               0 & 1 & 0 & 0 \\
               0 & 0 & -1 & 0 \\
               0 & 0 & 0 & -1
        \end{pmatrix}=
        \begin{pmatrix}
               1 & 0 & 0 & 0 \\
               0 & -1 & 0 & 0 \\
               0 & 0 & 1 & 0 \\
               0 & 0 & 0 & -1
        \end{pmatrix}.$$
    \end{itemize}

    We conclude that~$\PuMon(H) \cong \mathbb{Z}_2 \times \mathbb{Z}_2$.

\end{proof}

\begin{thm:hopf}
    Let~$L$ be a split link composed of~$n$ unknots and~$m$ Hopf links. Then we have 
   ~$$\PuMon(L)\cong \left(\Z_2^n \times (\Z_2\times \Z_2)^m\right) \rtimes (S_n\times S_m).$$
\end{thm:hopf}

\begin{proof}
    Combine Corollary \ref{cor:refined formula when there is no torsion}, Lemma \ref{lem:unknot}, and Lemma \ref{lem:hopf_link}.
\end{proof}

\bibliographystyle{alpha}
\bibliography{bibliography.bib}

@article{Jacobsson,
 author = {Jacobsson, Magnus},
 title = {An invariant of link cobordisms from {Khovanov} homology},
 fjournal = {Algebraic \& Geometric Topology},
 journal = {Algebr. Geom. Topol.},
 issn = {1472-2747},
 volume = {4},
 pages = {1211--1251},
 year = {2004},
 language = {English},
 doi = {10.2140/agt.2004.4.1211},
 url = {https://eudml.org/doc/125145},
 zbMATH = {2139857},
 Zbl = {1072.57018}
}

@article{Goldsmith,
 author = {Goldsmith, Deborah L.},
 title = {The theory of motion groups},
 fjournal = {Michigan Mathematical Journal},
 journal = {Mich. Math. J.},
 issn = {0026-2285},
 volume = {28},
 pages = {3--17},
 year = {1981},
 language = {English},
 doi = {10.1307/mmj/1029002454},
 zbMATH = {3723519},
 Zbl = {0462.57007}
}

@article{Hayden_Sundberg,
 author = {Hayden, Kyle and Sundberg, Isaac},
 title = {Khovanov homology and exotic surfaces in the 4-ball},
 fjournal = {Journal f{\"u}r die Reine und Angewandte Mathematik},
 journal = {J. Reine Angew. Math.},
 issn = {0075-4102},
 volume = {809},
 pages = {217--246},
 year = {2024},
 language = {English},
 doi = {10.1515/crelle-2024-0001},
 zbMATH = {7829702},
 Zbl = {1539.57019}
}

@article{Khovanov_OG,
 author = {Khovanov, Mikhail},
 title = {A categorification of the {Jones} polynomial},
 fjournal = {Duke Mathematical Journal},
 journal = {Duke Math. J.},
 issn = {0012-7094},
 volume = {101},
 number = {3},
 pages = {359--426},
 year = {2000},
 language = {English},
 doi = {10.1215/S0012-7094-00-10131-7},
 zbMATH = {1441958},
 Zbl = {0960.57005}
}

@phdthesis{Dahm62,
    author = {David M. Dahm},
    title = {A generalisation of braid theory},
    school = {Princeton University},
    year = {1962},
}

@mastersthesis{Collari,
    author = {Carlo Collari},
    title = {The functoriality of {Khovanov} Homology and the monodromy of knots},
    school = {University of Pisa},
    year = {2013},
}

@article {WattenbergSmoothMotionsUnlink72,
    AUTHOR = {Wattenberg, Frank},
     TITLE = {Differentiable motions of unknotted, unlinked circles in
              {$3$}-space},
   JOURNAL = {Math. Scand.},
  FJOURNAL = {Mathematica Scandinavica},
    VOLUME = {30},
      YEAR = {1972},
     PAGES = {107--135},
      ISSN = {0025-5521,1903-1807},
   MRCLASS = {57D40 (57E05)},
  MRNUMBER = {334243},
MRREVIEWER = {Wilbur\ Whitten},
       DOI = {10.7146/math.scand.a-11068},
       URL = {https://doi.org/10.7146/math.scand.a-11068},
}

@article{Boyd-Bregman,
 author = {Boyd, Rachael and Bregman, Corey},
 title = {Embedding spaces of split links},
 fjournal = {Advances in Mathematics},
 journal = {Adv. Math.},
 issn = {0001-8708},
 volume = {470},
 pages = {41},
 note = {Id/No 110235},
 year = {2025},
 language = {English},
 doi = {10.1016/j.aim.2025.110235},
 zbMATH = {8028625},
 Zbl = {1568.58001}
}

@article{Goldsmith_torus,
 author = {Goldsmith, D. L.},
 title = {Motion of links in the 3-sphere},
 fjournal = {Mathematica Scandinavica},
 journal = {Math. Scand.},
 issn = {0025-5521},
 volume = {50},
 pages = {167--205},
 year = {1982},
 language = {English},
 doi = {10.7146/math.scand.a-11953},
 url = {https://eudml.org/doc/166791},
 zbMATH = {3748049},
 Zbl = {0476.57004}
}

@article{Gujral-Levine,
 author = {Gujral, Onkar Singh and Levine, Adam Simon},
 title = {Khovanov homology and cobordisms between split links},
 fjournal = {Journal of Topology},
 journal = {J. Topol.},
 issn = {1753-8416},
 volume = {15},
 number = {3},
 pages = {973--1016},
 year = {2022},
 language = {English},
 doi = {10.1112/topo.12244},
 zbMATH = {7738176},
 Zbl = {1532.57003}
}

@misc{Boyd-Bregman-Hopf,
 author = {Rachael Boyd and Corey Bregman},
 title = {The embedding space of a {Hopf} link},
 year = {2025},
 howpublished = {preprint, {arXiv}:2504.21806},
 url = {https://arxiv.org/abs/2504.21806},
 arXiv = {arXiv:2504.21806}
}

@article{Rasmussen,
 author = {Rasmussen, Jacob},
 title = {Khovanov homology and the slice genus},
 fjournal = {Inventiones Mathematicae},
 journal = {Invent. Math.},
 issn = {0020-9910},
 volume = {182},
 number = {2},
 pages = {419--447},
 year = {2010},
 language = {English},
 doi = {10.1007/s00222-010-0275-6},
 zbMATH = {5818344},
 Zbl = {1211.57009}
}

@misc{Ren-Willis,
 author = {Qiuyu Ren and Michael Willis},
 title = {Khovanov homology and exotic {$4$}-manifolds},
 year = {2025},
 howpublished = {Preprint, {arXiv}:2402.10452 [math.{GT}] (2025)},
 url = {https://arxiv.org/abs/2402.10452},
 arXiv = {arXiv:2402.10452}
}

@article{Piccirillo,
 author = {Piccirillo, Lisa},
 title = {The {Conway} knot is not slice},
 fjournal = {Annals of Mathematics. Second Series},
 journal = {Ann. Math. (2)},
 issn = {0003-486X},
 volume = {191},
 number = {2},
 pages = {581--591},
 year = {2020},
 language = {English},
 doi = {10.4007/annals.2020.191.2.5},
 zbMATH = {7168645},
 Zbl = {1471.57011}
}

@article{Carter-Saito-1,
 author = {Carter, J. Scott and Saito, Masahico},
 title = {Reidemeister moves for surface isotopies and their interpretation as moves to movies},
 fjournal = {Journal of Knot Theory and its Ramifications},
 journal = {J. Knot Theory Ramifications},
 issn = {0218-2165},
 volume = {2},
 number = {3},
 pages = {251--284},
 year = {1993},
 language = {English},
 doi = {10.1142/S0218216593000167},
 zbMATH = {488130},
 Zbl = {0808.57020}
}

@book{MacLane,
 author = {Mac Lane, Saunders},
 title = {Homology.},
 fseries = {Grundlehren der Mathematischen Wissenschaften},
 series = {Grundlehren Math. Wiss.},
 issn = {0072-7830},
 volume = {114},
 year = {1963},
 publisher = {Springer, Cham},
 language = {German},
 zbMATH = {3216237},
 Zbl = {0133.26502}
}

@article {FouxeRabinovitchAutsFreeProductsI40,
    AUTHOR = {Fouxe-Rabinovitch, D. I.},
     TITLE = {\"{U}ber die {A}utomorphismengruppen der freien {P}rodukte. {I}},
   JOURNAL = {Rec. Math. [Mat. Sbornik] N.S.},
  FJOURNAL = {Rec. Math. [Mat. Sbornik] N.S.},
    VOLUME = {8/50},
      YEAR = {1940},
     PAGES = {265--276},
   MRCLASS = {20.0X},
  MRNUMBER = {3413},
MRREVIEWER = {L.\ Zippin},
}

@article {FouxeRabinovitchAutsFreeProductsII41,
    AUTHOR = {Fouxe-Rabinovitch, D. I.},
     TITLE = {\"{U}ber die {A}utomorphismengruppen der freien {P}rodukte.
              {II}},
   JOURNAL = {Rec. Math. [Mat. Sbornik] N.S.},
  FJOURNAL = {Rec. Math. [Mat. Sbornik] N.S.},
    VOLUME = {9/51},
      YEAR = {1941},
     PAGES = {183--220},
   MRCLASS = {20.0X},
  MRNUMBER = {4625},
}

@article {DamianiKamadaRingHTrivialLinks19,
    AUTHOR = {Damiani, Celeste and Kamada, Seiichi},
     TITLE = {On the group of ring motions of an {H}-trivial link},
   JOURNAL = {Topology Appl.},
  FJOURNAL = {Topology and its Applications},
    VOLUME = {264},
      YEAR = {2019},
     PAGES = {51--65},
      ISSN = {0166-8641,1879-3207},
   MRCLASS = {57M07 (20F36 57M25)},
  MRNUMBER = {3974715},
MRREVIEWER = {J.\ S.\ Birman},
       DOI = {10.1016/j.topol.2019.06.004},
       URL = {https://doi.org/10.1016/j.topol.2019.06.004},
}

@article {BrendleHatcherRingsWickets13,
    AUTHOR = {Brendle, Tara E. and Hatcher, Allen},
     TITLE = {Configuration spaces of rings and wickets},
   JOURNAL = {Comment. Math. Helv.},
  FJOURNAL = {Commentarii Mathematici Helvetici. A Journal of the Swiss
              Mathematical Society},
    VOLUME = {88},
      YEAR = {2013},
    NUMBER = {1},
     PAGES = {131--162},
      ISSN = {0010-2571,1420-8946},
   MRCLASS = {20F36 (57M07)},
  MRNUMBER = {3008915},
MRREVIEWER = {Juan\ Gonz\'alez-Meneses},
       DOI = {10.4171/CMH/280},
       URL = {https://doi.org/10.4171/CMH/280},
}

@article {BellingeriBodinBraidGroupNecklace16,
    AUTHOR = {Bellingeri, Paolo and Bodin, Arnaud},
     TITLE = {The braid group of a necklace},
   JOURNAL = {Math. Z.},
  FJOURNAL = {Mathematische Zeitschrift},
    VOLUME = {283},
      YEAR = {2016},
    NUMBER = {3-4},
     PAGES = {995--1010},
      ISSN = {0025-5874,1432-1823},
   MRCLASS = {20F34 (20F36 57M25)},
  MRNUMBER = {3519992},
MRREVIEWER = {Bruno\ P.\ Zimmermann},
       DOI = {10.1007/s00209-016-1630-0},
       URL = {https://doi.org/10.1007/s00209-016-1630-0},
}

@article {ShumakovitchTorsion,
    AUTHOR = {Shumakovitch, Alexander N.},
     TITLE = {Torsion of {K}hovanov homology},
   JOURNAL = {Fund. Math.},
  FJOURNAL = {Fundamenta Mathematicae},
    VOLUME = {225},
      YEAR = {2014},
    NUMBER = {1},
     PAGES = {343--364},
      ISSN = {0016-2736,1730-6329},
   MRCLASS = {57M27 (57M25)},
  MRNUMBER = {3205577},
MRREVIEWER = {Pedro\ Vaz},
       DOI = {10.4064/fm225-1-16},
       URL = {https://doi.org/10.4064/fm225-1-16},
}

\end{document}